\numberwithin{equation}{section}
\newtheorem{theorem}{Theorem}[section]	
\newtheorem{proposition}[theorem]{Proposition}	
\newtheorem{corollary}[theorem]{Corollary}	
\newtheorem{lemma}[theorem]{Lemma}
\newtheorem{definition}[theorem]{Definition}
\newtheorem{remark}[theorem]{Remark}
\newcommand{\Z}{\mathbb{Z}}
\newcommand{\C}{\mathscr{C}}
\newcommand{\T}{\mathscr{T}}
\newcommand{\F}{\mathscr{F}}
\newcommand{\z}{\mathscr{Z}}
\newcommand{\n}{\mathscr{N}}
\newcommand{\E}{\mathscr{E}}
\newcommand{\M}{\mathscr{M}}
\begin{document}

\title{\textsc{Torsion theories and coverings of $V$-groups}}
\author{Aline Michel}
\thanks{The author's research is funded by a FRIA doctoral grant of the \emph{Communaut\'e française de Belgique}}
\email{aline.michel@uclouvain.be}
\address{Universit{\'e} Catholique de Louvain, Institut de Recherche en Math{\'e}matique et Physique, Chemin du Cyclotron 2, 1348 Louvain-la-Neuve, Belgium}
\date{}
\begin{abstract}
For a commutative, unital and integral quantale $V$, we generalize to $V$-groups the results developed by Gran and Michel for preordered groups. We first of all show that, in the category $V$-$\mathsf{Grp}$ of $V$-groups, there exists a torsion theory whose torsion and torsion-free subcategories are given by those of indiscrete and separated $V$-groups, respectively. It turns out that this torsion theory induces a monotone-light factorization system that we characterize, and it is then possible to describe the coverings in $V$-$\mathsf{Grp}$. We next classify these coverings as internal actions of a Galois groupoid. Finally, we observe that the subcategory of separated $V$-groups is also a torsion-free subcategory for a pretorsion theory whose torsion subcategory is the one of symmetric $V$-groups. As recently proved by Clementino and Montoli, this latter category is actually not only coreflective, as it is the case for any torsion subcategory, but also reflective. 
\end{abstract}

\maketitle

\section{Introduction}

In the paper \cite{GM}, torsion theories and coverings have been studied in the category $\mathsf{PreOrdGrp}$ of preordered groups \cite{Clementino-Martins-Ferreira-Montoli}. A \emph{preordered group} is a group $(G,+,0)$ endowed with a preorder $\le$ which is compatible with the addition law $+$ of the group $G$: for any $a,b,c,d \in G$, $a \le c$ and $b \le d$ implies that $a + b \le c + d$. In other words, a preordered group is given by a group and a relation satisfying some additional conditions (the reflexivity and the transitivity of the given relation as well as its compatibility with the group structure). Now a relation $R$ on a set $X$ can be seen as a function $r \colon X \times X \rightarrow 2 = \{\top,\bot\}$ where, for any $(x,x') \in X \times X$, $r(x,x') = \top$ if $(x,x') \in R$ and $r(x,x') = \bot$ if $(x,x') \notin R$. This suggests to consider a generalization of the notion of relation: for a commutative and unital \emph{quantale} $V$ (that is, $V$ is a complete lattice $(V, \wedge, \vee, \top, \bot)$ equipped with a binary operation $\otimes$ which is commutative, distributes over arbitrary joins, and has a unit $k$), we could consider \emph{$V$-relations}, a $V$-relation $r \colon X \nvrightarrow X$ on a set $X$ being a function $r \colon X \times X \rightarrow V$.

Based on this notion of $V$-relation, a generalization of the category $\mathsf{PreOrdGrp}$ of preordered groups has been studied in the article \cite{CM}. A \emph{$V$-group} is a group $(X,+,0)$ endowed with a $V$-relation $a \colon X \nvrightarrow X$ satisfying two properties (called the reflexivity and the transitivity axioms), and which is invariant by shifting: for any $x,x',x'' \in X$, $a(x',x'') = a(x' + x,x'' + x)$. In this sense, a preordered group is then a particular case of $V$-group: it is a $2$-group (for the quantale $2 = \{\top,\bot\}$). There are many other very interesting examples of $V$-groups, such as the \emph{Lawvere metric groups}, the \emph{Lawvere ultrametric groups}, the \emph{probabilistic metric groups}, etc \cite{Law}.  

The aim of this paper consists in generalizing to any category of $V$-groups (up to an additional assumption on the quantale $V$) the results on torsion theories and coverings developed in \cite{GM} in the particular setting of preordered groups. We start the article with some prerequisites needed to understand its content. Among other things, we present a summary of the categorical properties of $V$-groups (as described in \cite{CM}), and next we recall the notions of \emph{normal categories}, of \emph{effective descent morphisms}, of \emph{torsion} and \emph{pretorsion theories}, of \emph{factorization systems} and of \emph{coverings} (in the sense of categorical Galois theory).

In the next section, we prove our first result: when $V$ is an \emph{integral} quantale (i.e. when the unit $k$ of $\otimes$ coincides with the top element $\top$), there is a torsion theory in the category $V$-$\mathsf{Grp}$ of $V$-groups. The torsion subcategory $V$-$\mathsf{Grp_{ind}}$ is the full subcategory of $V$-$\mathsf{Grp}$ whose objects are $V$-groups $(X,a)$ such that $a(x,x') = \top$ for any $x,x' \in X$, while the torsion-free subcategory, denoted by $V$-$\mathsf{Grp_{sep}}$, contains all $V$-groups $(X,a)$ satisfying the separation axiom: for any $x,x' \in X$,
\[a(x,x') \geq k \qquad \text{and} \qquad a(x',x) \geq k \qquad \implies \qquad x=x'.\]
Of course, whenever $V$ is integral, this axiom reduces to
\[a(x,x') = \top = a(x',x) \qquad \implies \qquad x=x'.\] 
The objects in $V$-$\mathsf{Grp_{ind}}$ and in $V$-$\mathsf{Grp_{sep}}$ are called \emph{indiscrete} and \emph{separated} $V$-groups, respectively. As a direct consequence (up to some results mentioned in the Background section), we get that $V$-$\mathsf{Grp_{sep}}$ is a \emph{(normal epi)-reflective} subcategory and that $V$-$\mathsf{Grp_{ind}}$ is \emph{(normal mono)-coreflective} in $V$-$\mathsf{Grp}$. The above mentioned torsion theory also induces a factorization system $(\E,\M)$ for which we give a fairly simple description. The class $\M$ of morphisms in $V$-$\mathsf{Grp}$ corresponds to the \emph{trivial coverings} (in the sense of categorical Galois theory). We then prove two intermediate propositions. In one of them, we build, for any $(X,a) \in \text{$V$-}\mathsf{Grp}$, a separated $V$-group $(Y,b)$ as well as an effective descent morphism
\begin{equation} \label{effective descent morphism}
f \colon (Y,b) \rightarrow (X,a)
\end{equation} 
in the category of $V$-groups. Thanks to these two propositions, it is possible to conclude that $(\E',\M^*)$ is a \emph{monotone-light} factorization system \cite{Carboni-Janelidze-Kelly-Pare}, where $\M^*$ is actually the class of coverings in $V$-$\mathsf{Grp}$. We are moreover able to characterize the morphisms in both classes. In particular, when $V$ is an integral quantale, the coverings in $V$-$\mathsf{Grp}$ are given by the morphisms whose kernel is a separated $V$-group.

In section 4, we then make two observations related to the results of section 3. We first notice that it is possible to classify the coverings in $V$-$\mathsf{Grp}$ in terms of $\mathsf{Gal}(f)$-actions, where $f$ is the special effective descent morphism as in \eqref{effective descent morphism} constructed previously and $\mathsf{Gal}(f)$ the \emph{Galois groupoid} associated with $f$. This can be done by using the notion of \emph{locally semisimple covering} introduced by Janelidze, M\'{a}rki and Tholen (see Theorem 3.1 in \cite{JMT}). Secondly, we observe that $V$-$\mathsf{Grp_{sep}}$ is a torsion-free subcategory not only for a torsion theory but also for a pretorsion theory. In this case, the torsion subcategory is given by the full subcategory $V$-$\mathsf{Grp_{sym}}$ of \emph{symmetric} $V$-groups whose objects are $V$-groups $(X,a)$ satisfying the symmetry axiom: for any $x,x' \in X$, $a(x,x') = a(x',x)$. Note that, contrary to the above, for this last result, we do not need to assume that the quantale $V$ is integral. From this pretorsion theory, we deduce that $V$-$\mathsf{Grp_{sym}}$ is coreflective in $V$-$\mathsf{Grp}$. This has already been mentioned in \cite{CM}, where it is also proven that $V$-$\mathsf{Grp_{sym}}$ is reflective.

\section{Background}

\subsection*{The category $V$-$\mathsf{Grp}$ of $V$-groups}

We dedicate this first section to the notion of \emph{$V$-group}: among other things, we define such a notion through the concepts of \emph{$V$-relation} and \emph{$V$-category}, we give some examples of $V$-groups (including the example of preordered groups \cite{Clementino-Martins-Ferreira-Montoli}), and we describe some limits and colimits as well as the different kinds of epimorphisms and monomorphisms, and the short exact sequences in this special setting. We then state some properties about the category of V-groups which will be useful for our work. For this part, we follow the recent paper \cite{CM} where the reader will find more details. 

Consider $V$ a commutative and unital \emph{quantale}, i.e. $V$ is a complete lattice (with bottom element $\bot$ and top element $\top$) endowed with an associative and commutative tensor product $\otimes$ which has a unit $k$ and which preserves arbitrary joins: for any $v \in V$ and any family $\{u_i\}_{i \in I}$ in $V$,
\[v \otimes \left( \bigvee_{i \in I} u_i \right) = \bigvee_{i \in I} \left( v \otimes u_i \right) \qquad \qquad \text{and} \qquad \qquad v \otimes \bot = \bot.\]
We assume moreover that arbitrary joins distribute over finite meets so that, as a lattice, $V$ is a frame. We suppose also that the quantale $V$ is \emph{non-trivial}, i.e. that $\bot \neq \top$.

\begin{remark}
\emph{For part of our developments, we will in addition assume that $V$ is an \emph{integral} quantale, i.e. that $k = \top$ in $V$.}
\end{remark}

In order to understand the notion of \emph{$V$-group}, we first need to define the concepts of \emph{$V$-relation} and of \emph{$V$-category}.

A \emph{$V$-relation} $r : X \nvrightarrow Y$ from a set $X$ to a set $Y$ is a function $r \colon X \times Y \rightarrow V$. As for ordinary relations, a $V$-relation $r \colon X \nvrightarrow Y$ can be composed with a $V$-relation $s \colon Y \nvrightarrow Z$, via a “matricial multiplication”: for any $x \in X$ and any $z \in Z$,
\[(s \cdot r)(x,z) = \bigvee_{y \in Y} r(x,y) \otimes s(y,z),\]
and this defines a $V$-relation $s \cdot r \colon X \nvrightarrow Z$. The identity for the composition is given by the $V$-relation $1_X \colon X \nvrightarrow X$ defined, for any $x,x' \in X$, by
\[1_X(x,x') = \left\{
\begin{array}{lll}
k & \text{if} & x = x'  \\
\bot & \text{if} & x \neq x'.
\end{array}
\right.\]
The behaviour of $V$-relations under composition explains the terminology of \emph{$V$-matrices} we sometimes use instead of that of $V$-relations (see for instance \cite{Seal} for the use of this terminology). Indeed, the above definition of the composition as well as its properties (associativity and identity) make one think of the matrix product.

We then get the category $V$-$\mathsf{Rel}$ of $V$-relations whose objects are sets and whose arrows are $V$-relations. Now there exists a non full, bijective on objects, embedding functor $\mathsf{Set} \rightarrow V \text{-} \mathsf{Rel}$ which associates with any function $f \colon X \rightarrow Y$ the $V$-relation $f \colon X \nvrightarrow Y$ defined, for any $x \in X$ and any $y \in Y$, by
\[f(x,y) = \left\{
\begin{array}{ll}
k & \text{if} \ f(x) = y\\
\bot & \text{otherwise}.
\end{array}
\right.\]
In addition, for any pair $(X,Y)$ of sets, we have an order on the set $V$-$\mathsf{Rel}(X,Y)$ which is induced by the one on the quantale $V$: for $r,r' \colon X \nvrightarrow Y$ two $V$-relations from $X$ to $Y$, we say that $r \le r'$ if and only if, in $V$, $r(x,y) \le r'(x,y)$ for any $x \in X$ and $y \in Y$.

We are now ready for the definition of a \emph{$V$-category}. A $V$-category is a pair $(X,a)$ where $X$ is a set and $a \colon X \nvrightarrow X$ a $V$-relation such that
\[1_X \le a \qquad \qquad \text{and} \qquad \qquad a \cdot a \le a.\]
In pointwise notation, these last two inequalities give the \emph{reflexivity} and \emph{transitivity} axioms:
\begin{itemize}
\item[(R)] $k \le a(x,x)$ $\qquad$ for any $x \in X$;
\item[(T)] $a(x,x') \otimes a(x',x'') \le a(x,x'')$ $\qquad$ for any $x,x',x'' \in X$.
\end{itemize}
Given two $V$-categories $(X,a)$ and $(Y,b)$, a \emph{$V$-functor} $f \colon (X,a) \rightarrow (Y,b)$ is a function $f \colon X \rightarrow Y$ such that $f \cdot a \le b \cdot f$, that is, such that
\[a(x,x') \le b(f(x),f(x')) \qquad \qquad \text{for any} \ x,x' \in X.\]
This gives rise to the category $V$-$\mathsf{Cat}$ of $V$-categories and $V$-functors.

\begin{remark}
\emph{Pairs $(X,a)$ with $X$ a set and $a \colon X \nvrightarrow X$ a $V$-relation satisfying the reflexivity axiom $(R)$ only are said to be \emph{$V$-graphs}.}
\end{remark}

Now, a \emph{$V$-group} $(X,a,+)$ is a $V$-category $(X,a)$ endowed with a group structure 
\[(X,+ \colon X \times X \rightarrow X,- \colon X \rightarrow X,0 \colon 0 \rightarrow X)\]
such that $+ \colon (X,a) \otimes (X,a) \rightarrow (X,a)$ is a $V$-functor, where we define $(X,a) \otimes (X,a)$ by $(X \times X, a \otimes a)$ with $(a \otimes a) ((x_1,x'_1),(x_2,x'_2)) = a(x_1,x_2) \otimes a(x'_1,x'_2)$. A \emph{$V$-homomorphism} is a $V$-functor which is also a group homomorphism. $V$-groups and $V$-homomorphisms define the category $V$-$\mathsf{Grp}$ of $V$-groups.

When we want to check that a given pair $(X,a)$, with $(X,+)$ a group, is a $V$-group, it is sometimes easier to use the following proposition:

\begin{proposition} \cite{CM} \label{equivalent conditions for V-groups}
Let $(X,a)$ be a $V$-graph and $(X,+)$ be a group. Then the following conditions are equivalent:
\begin{enumerate}
\item $+ \colon (X,a) \otimes (X,a) \rightarrow (X,a)$ is a $V$-functor;
\item $(X,a)$ is a $V$-category and $a$ is invariant by shifting, i.e., for any $x,x',x'' \in X$,
\[a(x',x'') = a(x'+x,x''+x).\]
\end{enumerate}
\end{proposition}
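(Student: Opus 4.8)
The plan is to unfold condition (1) into a single pointwise inequality and then prove each implication by manipulating this inequality together with reflexivity, transitivity and shift invariance. By the definition of $(X,a) \otimes (X,a)$, the addition $+$ is a $V$-functor precisely when
\[ a(x_1,x_2) \otimes a(x_1',x_2') \le a(x_1 + x_1', x_2 + x_2') \]
holds for all $x_1,x_1',x_2,x_2' \in X$; call this inequality $(\ast)$. Since $(X,a)$ is assumed to be a $V$-graph, the reflexivity axiom (R) is available from the outset in both directions, and I will repeatedly use that $\otimes$ is monotone (being join-preserving) and that $k$ is its unit.

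For the implication (1) $\Rightarrow$ (2) I would first derive shift invariance. Applying $(\ast)$ to the pair $(x',x'')$ and the diagonal pair $(x,x)$ gives $a(x',x'') \otimes a(x,x) \le a(x'+x, x''+x)$; since $k \le a(x,x)$ by (R) and $k$ is the unit of $\otimes$, monotonicity yields $a(x',x'') \le a(x'+x, x''+x)$. Instantiating this same inequality at $-x$, with $x'+x$ and $x''+x$ in place of $x'$ and $x''$, produces the reverse inequality, hence the desired equality $a(x',x'') = a(x'+x, x''+x)$. I would then obtain transitivity by applying $(\ast)$ to the pairs $(x-x',0)$ and $(0,x''-x')$, which gives $a(x-x',0) \otimes a(0,x''-x') \le a(x-x',x''-x')$; the shift invariance just established identifies $a(x-x',0) = a(x,x')$, $a(0,x''-x') = a(x',x'')$ and $a(x-x',x''-x') = a(x,x'')$ (all via the shift by $x'$), whence $a(x,x') \otimes a(x',x'') \le a(x,x'')$, i.e. (T). Together with (R) this shows that $(X,a)$ is a $V$-category.

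For the converse (2) $\Rightarrow$ (1) I would reconstruct $(\ast)$ by inserting an intermediate point between $x_1+x_1'$ and $x_2+x_2'$ and invoking transitivity. Choosing the intermediate point $x_2+x_1'$, transitivity gives $a(x_1+x_1',x_2+x_1') \otimes a(x_2+x_1',x_2+x_2') \le a(x_1+x_1',x_2+x_2')$; shift invariance then rewrites the first factor as $a(x_1,x_2)$ and the second as $a(x_1',x_2')$, yielding exactly $(\ast)$ and hence that $+$ is a $V$-functor.

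The main obstacle is the bookkeeping in the (2) $\Rightarrow$ (1) direction: one must recognize each of the two factors produced by transitivity as the correct translate of $a(x_1,x_2)$ and $a(x_1',x_2')$, which is what forces the specific choice of the intermediate point $x_2+x_1'$ (the symmetric choice $x_1+x_2'$ works equally well). By comparison, the (1) $\Rightarrow$ (2) direction is routine once one notices that reflexivity together with the unit law for $\otimes$ lets the diagonal term $a(x,x)$ be absorbed, and that all three instances of $a$ appearing in the transitivity step differ only by the common shift $x'$.
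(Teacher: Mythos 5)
The paper does not actually prove this proposition --- it is quoted from \cite{CM} --- so there is no in-text proof to compare against; your strategy of unfolding (1) into the pointwise inequality $(\ast)$ and translating back and forth is certainly the intended one. Your direction (1) $\Rightarrow$ (2) is correct as written: the diagonal trick plus (R) and the unit law gives $a(x',x'') \le a(x'+x,x''+x)$, substituting $-x$ upgrades this to an equality, and your derivation of (T) from $(\ast)$ applied to $(x-x',0)$ and $(0,x''-x')$ uses only \emph{right} translates by $x'$, so it goes through even when $X$ is non-abelian.

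The gap is in (2) $\Rightarrow$ (1), at the sentence ``shift invariance then rewrites \dots the second as $a(x_1',x_2')$''. The second factor $a(x_2+x_1',\,x_2+x_2')$ is a \emph{left} translate of $(x_1',x_2')$ by $x_2$, whereas the invariance recorded in condition (2) is only the right-translation identity $a(x',x'')=a(x'+x,x''+x)$; the symmetric choice of intermediate point $x_1+x_2'$ runs into the same problem with the other factor. For non-abelian $X$, left invariance genuinely does not follow from right invariance together with (R) and (T): setting $\phi(g)=a(g,0)$, right invariance says $a(x,y)=\phi(x-y)$, axioms (R) and (T) say $k\le\phi(0)$ and $\phi(g)\otimes\phi(h)\le\phi(g+h)$, while left invariance demands that $\phi$ be conjugation-invariant. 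Concretely, for $V=2$ and $P$ a non-normal subgroup of $X$ (say $X=S_3$), the relation $a(x,y)=\top$ iff $x-y\in P$ is reflexive, transitive and right-invariant, yet $+$ fails to be a $V$-functor. So either ``invariant by shifting'' must be read as invariance under translation on \emph{both} sides --- which is what \cite{CM} intends, and what the present paper itself uses later (e.g.\ the step $a(0,x+n-x)=a(-x+x,n)=a(0,n)$ in the proof of Proposition \ref{torsion theory} involves a left translate) --- or the group must be abelian. Under the two-sided reading your proof is complete, but you should then also record the left-handed identity in direction (1) $\Rightarrow$ (2); it follows from the same diagonal trick with $(x,x)$ placed in the first slot of $(\ast)$.
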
 

If we consider the quantale $2 = (\{\bot,\top\},\wedge,\top)$ (i.e. $\otimes = \wedge$ and $k = \top$), then any $2$-group is in fact a preordered group, and the converse also holds. Indeed, given a $2$-group $(X,a)$, we can get a preordered group given by $(X,\le)$, where $x \le x'$ in $X$ if and only if $a(x,x') = \top$. Conversely, any preordered group $(X,\le)$ gives rise to a $2$-group $(X,a)$ by defining $a \colon X \nvrightarrow X$, for any $x,x' \in X$, by
\[a(x,x') = \left\{
\begin{array}{ll}
\top & \text{if} \ x \le x'\\
\bot & \text{otherwise.}
\end{array}
\right.\]
As a consequence, the category $2$-$\mathsf{Grp}$ of $2$-groups (and $2$-homomorphisms) is isomorphic to the category $\mathsf{PreOrdGrp}$ of preordered groups (and preorder preserving group homomorphisms), and this latter category is then an example of a category of $V$-groups. Other examples of categories of $V$-groups are given by the categories $\mathsf{MetGrp}$ of \emph{Lawvere (generalized) metric groups} and non expansive group homomorphisms (with $V = ([0, \infty],\geq)$, $\otimes = +$ and $k = 0$), $\mathsf{UMetGrp}$ of \emph{Lawvere (generalized) ultrametric groups} and non expansive group homomorphisms (with $V = ([0,\infty], \geq)$, $\otimes =$ the max for the usual order on real numbers and $k = 0$), $\mathsf{ProbMetGrp}$ of \emph{probabilistic metric groups} (with
\[V = \{ \phi : [0,\infty] \rightarrow [0,1] \ | \ \text{$\phi$ is monotone and} \ \phi (x) = \bigvee_{y < x} \phi(y)\},\]
with the pointwise order, and with $\otimes$ defined by $(\phi \otimes \psi)(u) = \bigvee_{x + y \le u} \phi(x) \times \psi(y)$), etc.

We now mention some interesting full subcategories of the category $V$-$\mathsf{Grp}$ of $V$-groups. First of all, we can consider the subcategory $V$-$\mathsf{Grp_{sym}}$ of \emph{symmetric $V$-groups}, i.e. $V$-groups $(X,a,+)$ which coincide with their dual $(X,a^\circ,+)$, where $a^{\circ}$ is the $V$-relation on $X$ defined, for any $x,x' \in X$ by $a^\circ(x,x') = a(x',x)$. We also have the subcategory $V$-$\mathsf{Grp_{sep}}$ of \emph{separated $V$-groups}, that is of $V$-groups $(X,a,+)$ satisfying the following axiom: for $x,x' \in X$,
\[a(x,x') \geq k \quad \text{and} \quad a(x',x) \geq k \qquad \implies \qquad x = x'.\]
Note that, since $a$ is invariant by shifting, the above property is equivalent to the following : for $x \in X$,
\[a(x,0) \geq k \quad \text{and} \quad a(0,x) \geq k \qquad \implies \qquad x = 0.\]
We borrow this terminology (which, to our knowledge, does not exist in the literature yet) from the more general notion of separated $V$-category (mentioned, for instance, in the Appendix of \cite{HN} and in \cite{HR}). Note that, for $V = ([0,\infty],\geq)$ (with $\otimes = +$), symmetric $V$-groups coincide with the usual symmetric Lawvere metric groups (which are Lawvere metric groups $(X,d)$ with $d(x,x') = d(x',x)$ for any $x,x' \in X$), while separated $V$-groups correspond to the usual separated Lawvere metric groups (i.e. those Lawvere metric groups $(X,d)$ satisfying the separation axiom: $d(x,x') = 0 = d(x',x) \implies x=x'$). We can next consider $V$-groups $(X,a,+)$ with the \emph{indiscrete} $V$-category structure: $a(x,x') = \top$ for any $x,x' \in X$. On the contrary, we can also consider these $V$-groups with the \emph{discrete} $V$-category structure, meaning that
\[a(x,x') = \left\{
\begin{array}{ll}
k & \text{if} \ x = x'\\
\bot & \text{otherwise.}
\end{array}
\right.\]
These two structures give rise to the categories $V$-$\mathsf{Grp_{ind}}$ of indiscrete $V$-groups and $V$-$\mathsf{Grp_{dis}}$ of discrete $V$-groups, respectively, which are both full subcategories of $V$-$\mathsf{Grp}$.

The category $V$-$\mathsf{Grp}$ of $V$-groups is complete and cocomplete. Before describing the limits and some colimits in this particular setting, we first remark that the initial object in $V$-$\mathsf{Grp}$ is given by the $V$-group $(\{*\},\kappa)$ where $\kappa(*,*) = k$ while the terminal object is $(\{*\},c)$ where $c(*,*) = \top$, so that the category $V$-$\mathsf{Grp}$ of $V$-groups is pointed if and only if $k = \top$, i.e. $V$ is integral.

In $V$-$\mathsf{Grp}$, the binary product of two $V$-groups $(X,a)$ and $(Y,b)$ is given by $(X \times Y,a \wedge b)$, where
\[(a \wedge b)((x_1,y_1),(x_2,y_2)) = a(x_1,x_2) \wedge b(y_1,y_2),\]
for any $(x_1,y_1), (x_2,y_2) \in X \times Y$. The equalizer of a pair
\begin{equation} \label{diag equalizer}
\begin{tikzcd}
f,g \colon (X,a) \arrow[r,shift left] \arrow[r,shift right]
& (Y,b)
\end{tikzcd}
\end{equation}
of arrows in $V$-$\mathsf{Grp}$ is given by
\begin{tikzcd}
(E,\tilde{a}) \arrow[r,"e"]
& (X,a)
\end{tikzcd}
where $(E,e)$ is the equalizer of $f$ and $g$ in the category $\mathsf{Grp}$ of groups and $\tilde{a}$ is the $V$-category structure induced by the one of $X$: $\tilde{a} = e^\circ \cdot a \cdot e$ or, equivalently, $\tilde{a}(x,x') = a(e(x),e(x'))$ for any $x,x' \in E$. So the pullback of two arrows $f \colon (X,a) \rightarrow (Z,c)$ and $g \colon (Y,b) \rightarrow (Z,c)$ is $((X \times_Z Y,a \wedge b),\pi_1,\pi_2)$, where $(X \times_Z Y,\pi_1,\pi_2)$ is the pullback of $f$ and $g$ in the category $\mathsf{Grp}$ of groups. In particular, the kernel of $f$ is given by 
\begin{tikzcd}
(K,\tilde{a}) \arrow[r,"k"]
& (X,a)
\end{tikzcd}
where $(K,k)$ is the kernel of $f$ in $\mathsf{Grp}$ and $\tilde{a}$ is the $V$-category structure induced by the one of $X$.

It is also possible to describe the colimits in $V$-$\mathsf{Grp}$. The coequalizer of two parallel arrows $f$ and $g$ as in \eqref{diag equalizer} is obtained by computing the coequalizer $(Q,q)$ of $f$ and $g$ in $\mathsf{Grp}$ and then by equipping the group $Q$ with the final $V$-category structure $\bar{b} = q \cdot b \cdot q^\circ$, that is
\[\bar{b}(z_1,z_2) = \bigvee_{q(y_i) = z_i} b(y_1,y_2)\]
for any $z_1, z_2 \in Q$. In particular, the cokernel of $f$ is given by
\begin{tikzcd}
(Y,b) \arrow[r,"q"] 
& (Q,\bar{b})
\end{tikzcd}
where $(Q,q)$ is the cokernel of $f$ in $\mathsf{Grp}$ and $\bar{b}$ is the final V-category structure. The description of coproducts in $V$-$\mathsf{Grp}$ is more complex but it will not be needed for our paper so that we do not give details about it here and refer the interested reader to \cite{CM}.

Let us now study the different kinds of epimorphisms and monomorphisms which will be of interest for our work. An arrow $f \colon (X,a) \rightarrow (Y,b)$ in $V$-$\mathsf{Grp}$ is an epimorphism if and only if it is surjective. It is a regular epimorphism whenever it is in addition final in the sense that $b = f \cdot a \cdot f^\circ$. Moreover, the category of $V$-groups has the particularity that any regular epimorphism is a normal epimorphism (whenever $V$ is integral). The morphism $f$ is a monomorphism if and only if it injective, and it is a normal monomorphism whenever it is a normal monomorphism in $\mathsf{Grp}$ and $a(x,x') = b(f(x),f(x'))$ for any $x,x' \in X$. 

In the next proposition we gather the information from \cite{CM} which is useful to describe short exact sequences in the category $V$-$\mathsf{Grp}$ of $V$-groups:

\begin{proposition} \label{kernel-cokernel}
Consider, in $V$-$\mathsf{Grp}$, a pair of composable arrows as in the following diagram:
\begin{equation} \label{diag kernel-cokernel}
\begin{tikzcd}
(X,a) \arrow[r,"k"] 
& (Y,b) \arrow[r,"f"]
& (Z,c).
\end{tikzcd}
\end{equation}
Then:
\begin{itemize}
\item the morphism $k$ is the kernel of $f$ if and only if $k$ is the kernel of $f$ in $\mathsf{Grp}$ and $a(x,x') = b(k(x),k(x'))$ for any $x,x' \in X$ (i.e. $a = k^\circ \cdot b \cdot k$);
\item the morphism $f$ is the cokernel of $k$ if and only if $f$ is the cokernel of $k$ in $\mathsf{Grp}$ and $c(z_1,z_2) = \bigvee_{f(y_i) = z_i} b(y_1,y_2)$ for any $z_1,z_2 \in Z$ (i.e. $c = f \cdot b \cdot f^\circ$);
\item the sequence \eqref{diag kernel-cokernel} is a short exact sequence in $V$-$\mathsf{Grp}$ if and only if
\begin{center}
\begin{tikzcd}
0 \arrow[r]
& X \arrow[r,"k"]
& Y \arrow[r,"f"]
& Z \arrow[r]
& 0
\end{tikzcd}
\end{center}
is a short exact sequence in $\mathsf{Grp}$, $a(x,x') = b(k(x),k(x'))$ for any $x,x' \in X$ (i.e. $a = k^\circ \cdot b \cdot k$) and $c(z_1,z_2) = \bigvee_{f(y_i) = z_i} b(y_1,y_2)$ for any $z_1,z_2 \in Z$ (i.e. $c = f \cdot b \cdot f^\circ$).
\end{itemize}
\end{proposition}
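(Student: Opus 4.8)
The plan is to deduce the whole proposition from the explicit descriptions of kernels and cokernels in $V$-$\mathsf{Grp}$ recalled just above the statement, together with the observation that the forgetful functor $U \colon V\text{-}\mathsf{Grp} \to \mathsf{Grp}$ preserves kernels and cokernels. This last fact is immediate from those descriptions, since in each case the underlying group of the $V$-group (co)limit is exactly the corresponding (co)limit in $\mathsf{Grp}$. The three items are then obtained by a routine ``uniqueness up to a unique isomorphism'' argument, the only genuine content being the identification of the induced, respectively final, $V$-category structure.

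For the first item I would treat the two implications separately. The converse ($\Leftarrow$) is immediate: if $k$ is the group-theoretic kernel of $f$ and $a(x,x') = b(k(x),k(x'))$ for all $x,x'$, then $(X,a) \xrightarrow{k} (Y,b)$ is literally the object produced by the Background construction of the kernel (the group kernel equipped with the induced structure $\tilde a = k^\circ \cdot b \cdot k$), hence it is the kernel in $V\text{-}\mathsf{Grp}$. For the forward implication ($\Rightarrow$), suppose $k$ is the kernel of $f$ in $V\text{-}\mathsf{Grp}$ and let $\big((K,\tilde a),\iota\big)$ denote the canonical kernel of the Background description. By the universal property there is a unique isomorphism $\varphi \colon (X,a) \to (K,\tilde a)$ of $V$-groups with $\iota \circ \varphi = k$. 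Applying $U$ shows that $k$ is the kernel of $f$ in $\mathsf{Grp}$; and since $\varphi$ is an isomorphism of $V$-groups, $a(x,x') = \tilde a(\varphi(x),\varphi(x')) = b(\iota\varphi(x),\iota\varphi(x')) = b(k(x),k(x'))$, which is the announced equality $a = k^\circ \cdot b \cdot k$.

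The second item is handled dually, replacing the induced structure by the final one. The converse is again immediate from the Background description of cokernels. For the forward direction, if $f$ is the cokernel of $k$ in $V\text{-}\mathsf{Grp}$, comparing it with the canonical cokernel $(Q,\bar b)$ (where $\bar b = q \cdot b \cdot q^\circ$ is the final structure and $q$ the group quotient) yields a unique $V$-isomorphism $\psi \colon (Q,\bar b) \to (Z,c)$ with $\psi \circ q = f$; applying $U$ gives that $f$ is the cokernel of $k$ in $\mathsf{Grp}$, and using that $\psi$ is a $V$-isomorphism together with the surjectivity of $q$ one recovers $c(z_1,z_2) = \bigvee_{f(y_i) = z_i} b(y_1,y_2)$, i.e. $c = f \cdot b \cdot f^\circ$.

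Finally, the third item follows by combining the first two. By definition, the sequence \eqref{diag kernel-cokernel} is short exact in $V\text{-}\mathsf{Grp}$ exactly when $k$ is the kernel of $f$ and $f$ is the cokernel of $k$; by the first two items this amounts to the conjunction of the two structural equalities $a = k^\circ \cdot b \cdot k$ and $c = f \cdot b \cdot f^\circ$ together with the requirement that $k$ be the kernel of $f$ and $f$ the cokernel of $k$ in $\mathsf{Grp}$. This latter pair of conditions is precisely the exactness of $0 \to X \to Y \to Z \to 0$ in $\mathsf{Grp}$, which gives the stated characterization. I expect the only delicate point to be keeping track of the direction of the inequalities defining a $V$-functor, so as to extract genuine \emph{equalities} of $V$-relations from the comparison isomorphisms; everything else is formal once the forgetful functor is known to preserve these (co)limits.
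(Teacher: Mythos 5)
Your proposal is correct: the paper gives no proof of this proposition, simply gathering the statements from the cited work of Clementino and Montoli together with the explicit descriptions of kernels (group kernel with the induced structure $\tilde a = k^\circ\cdot b\cdot k$) and cokernels (group cokernel with the final structure $\bar b = q\cdot b\cdot q^\circ$) recalled just beforehand, and your derivation from those descriptions via the uniqueness of (co)limits and the fact that $V$-isomorphisms preserve the $V$-relation strictly is exactly the intended justification.
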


We end this section by stating some properties which will turn out to be useful later on. The category $V$-$\mathsf{Grp}$ is first of all regular, but not Barr-exact \cite{Barr}, as observed in Proposition 4.3 in \cite{CM}. In particular, every $V$-homomorphism $f \colon (X,a) \rightarrow (Y,b)$ factorizes as
\begin{center}
\begin{tikzcd}
(X,a) \arrow[rr,"f"] \arrow[dr,two heads,"e"']
& & (Y,b)\\
& (f(X),c) \arrow[ur,tail,"m"'] &
\end{tikzcd}
\end{center}
with $e$ a regular epimorphism in $V$-$\mathsf{Grp}$, $m$ a monomorphism in $V$-$\mathsf{Grp}$ and $c = e \cdot a \cdot e^{\circ}$.
It is moreover \emph{normal} (in the sense of the following section) whenever the quantale $V$ is integral, since any regular epimorphism is then a normal epimorphism, and \emph{effective descent morphisms} (see below for a short explanation) exactly coincide with regular epimorphisms. The detailed proofs of these assertions are due to Clementino and Montoli \cite{CM}.

\begin{remark}
\emph{In order to check the pullback-stability of regular epimorphisms in the category $V$-$\mathsf{Grp}$, Clementino and Montoli used the (above mentioned) assumption that, in the quantale $V$, arbitrary joins distribute over finite meets.}
\end{remark}

\subsection*{Normal categories}

A finitely complete category $\C$ is said to be \emph{normal} \cite{JZ} when
\begin{enumerate}
\item $\C$ has a zero object, denoted by $0$;
\item any arrow $f \colon A \rightarrow B$ in $\C$ factorizes as a normal epimorphism (i.e. a cokernel) followed by a monomorphism;
\item normal epimorphisms are stable under pullbacks: in a pullback diagram
\begin{center}
\begin{tikzcd}
E \times_{B} A \arrow[r,"\pi_{2}"] \arrow[d,"\pi_{1}"']
& A \arrow[d,"f"]\\
E \arrow[r,"p"']
& B
\end{tikzcd}
\end{center}
$\pi_2$ is a normal epimorphism whenever $p$ is a normal epimorphism.
\end{enumerate} 
Categories of groups, abelian groups, rings and Lie algebras are all examples of normal categories, as well as many other algebraic categories. Any semi-abelian category and likewise any homological category is normal. As examples of semi-abelian categories, we can mention the categories of cocommutative Hopf algebras over a field \cite{GSV}, of compact groups \cite{BC} and of $C^*$-algebras \cite{GR}. In connection with the present article, we also have that the category $V$-$\mathsf{Grp}$ of $V$-groups is normal whenever $V$ is an integral quantale (see \cite{CM}). In particular, $\mathsf{PreOrdGrp}$ \cite{Clementino-Martins-Ferreira-Montoli}, $\mathsf{MetGrp}$, $\mathsf{UMetGrp}$ and $\mathsf{ProbMetGrp}$ are normal categories. This information will be crucial for this work.

We now state two fundamental properties of normal categories:

\begin{lemma} \cite{BJ} \label{property-normal}
Let $\C$ be a normal category.
\begin{enumerate}
\item Any regular epimorphism $f \colon A \rightarrow B$ is the cokernel of its kernel, so that the pair $(\ker(f),f)$ forms a short exact sequence in $\C$:
\begin{center}
\begin{tikzcd} [column sep=large]
0 \arrow[r]
& \mathsf{Ker}(f) \arrow[r,"\ker(f)"] 
& A \arrow[r,two heads,"f"] 
& B  \arrow[r]
& 0.
\end{tikzcd}
\end{center}
\item Given a commutative diagram of short exact sequences in $\C$
\begin{equation}\label{generic-ses}
\begin{tikzcd}
0 \arrow[r]
& A \arrow[r,"\kappa"] \arrow[d,,"a"']
& B \arrow[r,"f"] \arrow[d,"b"]
& C \arrow[d,"c"] \arrow[r]
& 0\\
0 \arrow[r]
& A' \arrow[r,"\kappa'"']
& B' \arrow[r,"f'"']
& C' \arrow[r]
& 0
\end{tikzcd}
\end{equation}
the left-hand square is a pullback if and only if the arrow $c$ is a monomorphism.
\end{enumerate}
\end{lemma}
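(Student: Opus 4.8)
The plan is to establish the two parts separately, in each case leaning on the normal-epi--mono factorization (axiom~(2)) and the pullback-stability of normal epimorphisms (axiom~(3)).

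For part (1), I would first factor the regular epimorphism $f$ as $f = m \circ e$ with $e$ a cokernel and $m$ a monomorphism, using axiom~(2). The key point to observe is that every regular epimorphism is \emph{extremal}: if $f$ is the coequalizer of some pair $(u,v)$ and $f = m \circ e$ with $m$ a monomorphism, then $m e u = m e v$ forces $e u = e v$, so $e$ factors through $f$ as $e = h \circ f$, and then $f = m h f$ together with $f$ being epic yields $m h = \mathrm{id}$; being both split epic and monic, $m$ is an isomorphism. Hence $f$ is itself a cokernel, say $f = \mathrm{coker}(d)$ for some $d \colon D \to A$. It remains to identify this cokernel with $\mathrm{coker}(\ker f)$. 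Writing $k = \ker f$, the equality $f \circ d = 0$ lets $d$ factor as $d = k \circ d'$; consequently any $g$ with $g \circ k = 0$ also satisfies $g \circ d = 0$ and hence factors uniquely through $f$, while conversely any map factoring through $f$ annihilates $k$. This is precisely the universal property exhibiting $f$ as the cokernel of its kernel.

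For part (2), I would treat the implication ``$c$ a monomorphism $\Rightarrow$ left square a pullback'' by computing one kernel in two ways. Since $c$ is monic, $\ker(f' \circ b) = \ker(c \circ f) = \ker f$, the latter inclusion being $\kappa$. On the other hand, the kernel of $f' \circ b$ is the pullback of $\ker f' = \kappa'$ along $b$, which is exactly the left-hand square of~\eqref{generic-ses}. Comparing the two descriptions, the canonical comparison morphism $A \to A' \times_{B'} B$ must be the unique isomorphism of kernels; a short check, using that $\kappa'$ is monic, shows it is compatible with both projections, so the left square is a pullback. For the converse, assuming the left square is a pullback, I would prove $\ker c = 0$. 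Let $n \colon N \to C$ be the kernel of $c$ and pull the normal epimorphism $f$ back along $n$; by axiom~(3) the projection $\pi \colon B \times_C N \to N$ is again a normal epimorphism, in particular an epimorphism. Writing $\rho \colon B \times_C N \to B$ for the other projection, one gets $f' \circ b \circ \rho = c \circ f \circ \rho = c \circ n \circ \pi = 0$, so $b \circ \rho$ factors through $\kappa'$; feeding the resulting data into the pullback property of the left square produces $s$ with $\kappa \circ s = \rho$, whence $f \circ \rho = f \circ \kappa \circ s = 0$. But $f \circ \rho = n \circ \pi$, so $n \circ \pi = 0$, and as $\pi$ is epic this forces $n = 0$, i.e. $N = 0$. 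Thus $c$ is a monomorphism.

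The hard part, I expect, will not be either extremality argument but rather the two kernel-as-pullback identifications in part (2) and checking that the comparison map is the correct isomorphism; once the universal properties of kernels and cokernels are handled carefully, and axiom~(3) is invoked for the pullback of $f$ along $n$, the remainder is routine diagram chasing.
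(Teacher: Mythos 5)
Your proof is essentially correct, and it is worth noting that the paper itself offers no argument for this lemma: it is simply quoted from Bourn and Janelidze \cite{BJ}, so what you have written is a self-contained reconstruction from the three axioms of a normal category rather than a variant of anything in the text. Part (1) (extremality of regular epimorphisms, followed by the identification of $\mathrm{coker}(d)$ with $\mathrm{coker}(\ker f)$ via the factorization $d = k\circ d'$) and the forward direction of part (2) (computing $\ker(f'\circ b)$ once as $\ker f$ using that $c$ is monic, and once as the pullback of $\kappa'$ along $b$) are standard and complete; the compatibility of the comparison isomorphism with the maps to $A'$ does indeed reduce to $\kappa'$ being monic, as you indicate.

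The one step you should not leave implicit is the very last one in part (2): from $N=\mathsf{Ker}(c)=0$ you conclude that $c$ is a monomorphism. In a general pointed category this implication fails (pointed sets already give a counterexample), so it genuinely uses normality. The missing argument is short: factor $c = m\circ e$ with $e$ a normal epimorphism and $m$ a monomorphism, using axiom (2); then $\ker e = \ker c = 0$ because $m$ is monic, and since $e = \mathrm{coker}(g)$ for some $g$ with $e\circ g = 0$, the map $g$ factors through $\ker e = 0$, so $g = 0$ and $e = \mathrm{coker}(0)$ is an isomorphism, whence $c = m\circ e$ is monic. With that two-line addendum your proof is complete. (Also note, for the record, that the appeal to axiom (3) is licensed because $f$ is the cokernel of $\kappa$ in a short exact sequence, hence a normal epimorphism.)
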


\subsection*{Effective descent morphisms}

We now briefly recall the concept of \emph{effective descent morphism} and refer the interested reader to \cite{JST}, for instance, for a more complete presentation of the subject.

Consider a morphism $p \colon E \rightarrow B$ in a category $\C$ with pullbacks. We write $p^* \colon \C \downarrow B \rightarrow \C \downarrow E$ for the induced pullback functor along $p$, with $\C \downarrow B$ and $\C \downarrow E$ the usual slice categories. We say that the morphism $p \colon E \rightarrow B$ is an \emph{effective descent morphism} when the pullback functor $p^*$ is monadic. Now, there exists a different (and equivalent) way of defining such a morphism, which is expressed in terms of \emph{internal actions}. Let us remind this notion before introducing the alternative definition of effective descent morphisms.

Consider an internal category $\mathcal C$ in $\C$, represented by a diagram of the form 
$$\begin{tikzcd}
 {\mathcal C}: & C_{1} \times_{C_{0}} C_{1} \arrow[r,shift left=2.3,"p_{1}"] \arrow[r,shift right=2.3,"p_{2}"'] \arrow[r,"m" description]
& C_{1} \arrow[r,shift left=2.3,"d"] \arrow[r,shift right=2.3,"c"'] 
& C_{0}, \arrow[l,"s"' description]
\end{tikzcd}$$
where $C_{0}$ is the ``object of objects'', $C_{1}$ the ``object of arrows'', and $C_{1} \times_{C_{0}} C_{1}$ the ``object of composable pairs of arrows'' (see \cite{Borceux}, for instance, for more information about internal categories).
A(n internal) \emph{$\mathcal{C}$-action} is a triple $(A_{0},\pi,\xi)$ as in the diagram
\begin{center}
\begin{tikzcd}
C_{1} \times_{C_{0}} A_{0} \arrow[r,"\xi"]
& A_{0} \arrow[r,"\pi"]
& C_{0},
\end{tikzcd}
\end{center}
where $C_{1} \times_{C_{0}} A_{0}$ is the pullback of the ``domain'' arrow $d: C_{1} \rightarrow C_{0}$ and $\pi : A_{0} \rightarrow C_{0}$, making the following diagram commute:
\begin{center}
\begin{tikzcd}
C_{1} \times_{C_{0}} C_{1} \times_{C_{0}} A_{0} \arrow[r,"1 \times \xi"] \arrow[d,"m \times 1"']
& C_{1} \times_{C_{0}} A_{0} \arrow[d,"\xi"']
& A_{0} \arrow[l,"{(s\cdot \pi,1)}"'] \arrow[dl,equal]\\
C_{1} \times_{C_{0}} A_{0} \arrow[r,"\xi"'] \arrow[d,"\pi_{1}"']
& A_{0} \arrow[d,"\pi"] 
& \\
C_{1} \arrow[r,"c"']
& C_{0} 
&
\end{tikzcd}
\end{center}
where $\pi_{1} : C_{1} \times_{C_{0}} A_{0} \rightarrow C_{1}$ is the first projection in the pullback of $d$ and $\pi$. Given two internal $\mathcal{C}$-actions $(A_0,\pi,\xi)$ and $(A'_0,\pi',\xi')$, a morphism $(A_0,\pi,\xi) \rightarrow (A'_0,\pi',\xi')$ of $\mathcal{C}$-actions \cite{JT97} is given by a pair $(\phi_0,\phi_1)$ of morphisms in $\C$ such that the diagram below commutes:
\begin{center}
\begin{tikzcd}
C_1 \times_{C_0} A_0 \arrow[rr,"\xi"] \arrow[dr,dotted,"\phi_1"] \arrow[dd,"\pi_1"']
& & A_0 \arrow[dr,dotted,"\phi_0"] \arrow[dd,near end,"\pi"]
& \\
& C_1 \times_{C_0} A'_0 \arrow[rr,near start,"\xi'"] \arrow[dl,"\pi'_1"]
& & A'_0 \arrow[dl,"\pi'"] \\
C_1 \arrow[rr,"d"']
& & C_0. &
\end{tikzcd}
\end{center}
Note that the morphism $\phi_1$ is (uniquely) induced by $\phi_0$ and by the universal property of pullbacks. Internal $\mathcal{C}$-actions and their morphisms give rise to a category, the category $\C^{\mathcal{C}}$ of internal $\mathcal{C}$-actions.

One example of internal category is particularly interesting when studying effective descent morphisms. For a morphism $p \colon E \rightarrow B$ in $\C$, consider its kernel pair $(Eq(p),p_1,p_2)$. Then
\begin{center}
\begin{tikzcd}
Eq(p) \arrow[r,shift left = 2,"p_1"] \arrow[r,shift right = 2,"p_2"']
& E \arrow[l,"\Delta" description]
\end{tikzcd}
\end{center} 
is an internal category. To be more precise, it is an \emph{internal (effective) equivalence relation}.

A morphism $p \colon E \rightarrow B$ is called an effective descent morphism if and only if the comparison functor
\[K_p \colon \C \downarrow B \rightarrow \C^{Eq(p)},\]
sending an object $\alpha \colon A \rightarrow B$ of $\C \downarrow B$ to the $Eq(p)$-action
\begin{center}
\begin{tikzcd} [column sep=large]
Eq(p) \times _E (E \times_B A) \arrow[r,"p_1 \times \zeta_2"]
& E \times_B A \arrow[r,"\zeta_1"]
& E 
\end{tikzcd}
\end{center}
where $\zeta_1 \colon E \times_B A \rightarrow E$ and $\zeta_2 \colon E \times_B A \rightarrow A$ are the projections of the pullback $E \times_B A$, is an equivalence of categories. This characterization of effective descent morphisms will turn out to be useful later on.
 
\subsection*{Torsion and pretorsion theories}

One of the main goals of this paper is to find a torsion theory in the category $V$-$\mathsf{Grp}$ of $V$-groups. This is why we devote this section to the concept of \emph{pretorsion theory} in an arbitrary category, which is an extension of that of \emph{torsion theory} (in a normal category). Note that there are many different approaches to non-abelian torsion theories, which are detailed for example in \cite{BG,CDT, EG, JT} (and in the references therein). 

Let $\C$ be an arbitrary category, and let us then recall the notion of \emph{pretorsion theory} in the sense of \cite{FF}. For a thorough study of this topic, we refer to \cite{Facchini-Finocchiaro-Gran} and, for a closely related approach, to \cite{Mantovani,GJ,Clementino}.

Let $\z$ denote a (non-empty) class of objects of $\C$, and then write $\n$ for the class of morphisms in $\C$ that factorize through an object of $\z$. 

Given an arrow $f : A \rightarrow B$ in $\C$, one says that $k : K \rightarrow A$ is a \emph{$\z$-kernel} of $f$ when
\begin{itemize}
\item[$\bullet$] $f \cdot k \in \n$;
\item[$\bullet$] for any morphism $\alpha : X \rightarrow A$ such that $f \cdot \alpha \in \n$, there exists a unique arrow $\phi : X \rightarrow K$ such that $k \cdot \phi = \alpha$.
\end{itemize}
Dually, an arrow $c : B \rightarrow C$ is a \emph{$\z$-cokernel} of $f \colon A \rightarrow B$ when
\begin{itemize}
\item[$\bullet$] $c \cdot f \in \n$;
\item[$\bullet$] for any morphism $\alpha : B \rightarrow X$ such that $\alpha \cdot f \in \n$, there exists a unique arrow $\phi : C \rightarrow X$ such that $\phi \cdot c = \alpha$.
\end{itemize}
Remark that, by definition, any $\z$-kernel is a monomorphism and any $\z$-cokernel is an epimorphism.

\begin{definition}
Let $f : A \rightarrow B$ and $g : B \rightarrow C$ be two composable arrows in $\C$. The sequence
\begin{center}
\begin{tikzcd}
A \arrow[r,"f"]
& B \arrow[r,"g"]
& C
\end{tikzcd}
\end{center}
is said to be a \emph{short $\z$-exact sequence} when $f$ is a $\z$-kernel of $g$ and $g$ is a $\z$-cokernel of $f$.
\end{definition}

Note that, when the category $\C$ is pointed and $\z$ is reduced to the zero object, we recover the classical notions of kernel, cokernel and short exact sequence.

\begin{definition} \label{def pretorsion theory}
A \emph{$\z$-pretorsion theory} in the category $\C$ is given by a pair $(\T,\F)$ of full replete subcategories of $\C$, with $\z = \T \cap \F$, such that:
\begin{enumerate}
\item any morphism in $\C$ from $T \in \T$ to $F \in \F$ belongs to $\n$;
\item for any object $C$ of $\C$ there exists a short $\z$-exact sequence
\begin{center}
\begin{tikzcd}
T \arrow[r,"\epsilon_{C}"]
& C \arrow[r,"\eta_{C}"]
& F
\end{tikzcd}
\end{center}
with $T \in \T$ and $F \in \F$.
\end{enumerate}
\end{definition}

We use the terms \emph{torsion subcategory} and \emph{torsion-free subcategory} to refer to the subcategories $\T$ and $\F$, respectively, by analogy with the terminology of the classical torsion theory $(\mathsf{Ab}_{\text{t.}}, \mathsf{Ab}_{\text{t.f.}})$ in the category $\mathsf{Ab}$ of abelian groups, where $\mathsf{Ab}_{\text{t.}}$ is the category of torsion abelian groups and $\mathsf{Ab}_{\text{t.f.}}$ the category of torsion-free abelian groups.

Note that the short $\z$-exact sequence in Definition \ref{def pretorsion theory} (2) is actually unique up to isomorphism. Indeed, let us assume that we have, for an object $C$ in $\C$, two such short exact sequences
\begin{center}
\begin{tikzcd}
T \arrow[r,"\epsilon_C"] \arrow[d,dotted,"t"']
& C \arrow[r,"\eta_C"] \arrow[d,equal]
& F \arrow[d,dotted,"f"]\\
T' \arrow[r,"\epsilon'_C"']
& C \arrow[r,"\eta'_C"']
& F',
\end{tikzcd}
\end{center}
that is, such that $T, T' \in \T$ and $F, F' \in \F$. Then, since $T \in \T$ and $F' \in \F$, $\eta'_C \cdot \epsilon_C \in \n$ so that, by the universal property of $\z$-kernels, there exists a unique arrow $t \colon T \rightarrow T'$ making the left-hand square commute. Now the universal property of $\z$-kernels applied to $\epsilon_C$ (which is the $\z$-kernel of $\eta_C$) also gives us a unique morphism $s \colon T' \rightarrow T$ such that $\epsilon_C \cdot s = \epsilon'_C$, because $\eta_C \cdot \epsilon'_C \in \n$, and it is easy to check that this morphism $s$ is the inverse of $t$. As a conclusion, $t$ is an isomorphism. Dually, we also get a unique arrow $f \colon F \rightarrow F'$ making the right-hand square commute, and it turns out to be an isomorphism as well. 

Consider now a morphism $\phi \colon C \rightarrow C'$ in the category $\C$, and the two (unique up to isomorphism) short $\z$-exact sequences associated with $C$ and $C'$:
\begin{center}
\begin{tikzcd}
T \arrow[r,"\epsilon_C"] \arrow[d,dotted,"T(\phi)"']
& C \arrow[r,"\eta_C"] \arrow[d,"\phi"]
& F \arrow[d,dotted,"F(\phi)"]\\
T' \arrow[r,"\epsilon_{C'}"']
& C' \arrow[r,"\eta_{C'}"']
& F'.
\end{tikzcd}
\end{center}
Using the same kind of reasoning as above, we find two morphisms $T(\phi) \colon T \rightarrow T'$ and $F(\phi) \colon F \rightarrow F'$ such that $\epsilon_{C'} \cdot T(\phi) = \phi \cdot \epsilon_C$ and $F(\phi) \cdot \eta_C = \eta_{C'} \cdot \phi$. This construction gives rise to two functors, $F \colon \C \rightarrow \F$ and $T \colon \C \rightarrow \T$. The first one is a left adjoint for the inclusion functor $U \colon \F \rightarrow \C$ and, for any $C \in \C$, the $C$-component of the unit of the adjunction $F \dashv U$ is given by the epimorphism $\eta_C \colon C \rightarrow F = UF(C)$ in Definition \ref{def pretorsion theory}. As a consequence, the functor $F$ is an epi-reflector. The dual statement also holds: the functor $T$ is a mono-coreflector. Indeed, $T$ is a right adjoint of the inclusion functor $V \colon \T \rightarrow \C$ and any $C$-component $\epsilon_C \colon T = VT(C) \rightarrow C$ of the counit of the adjunction $V \dashv T$, which corresponds to the arrow $\epsilon_C$ in Definition \ref{def pretorsion theory}, is a monomorphism. 

Now, if the category $\C$ is normal (so in particular pointed), we find the notion of \emph{torsion theory} by considering $\z$ the class with the zero object only:

\begin{definition} \label{def torsion theory}
A \emph{torsion theory} in a normal category $\C$ is given by a pair $(\T,\F)$ of full (replete) subcategories of $\C$ such that:
\begin{enumerate}
\item the only arrow from any $T \in \T$ to any $F \in \F$ is the zero arrow;
\item for any object $C$ of $\C$ there exists a short exact sequence
\begin{center}
\begin{tikzcd}
0 \arrow[r]
& T \arrow[r,"\epsilon_{C}"]
& C \arrow[r,"\eta_{C}"]
& F \arrow[r]
& 0
\end{tikzcd}
\end{center}
whith $T \in \T$ and $F \in \F$.
\end{enumerate}
\end{definition}

In this particular situation, we also find the results mentioned above: the short exact sequence of Definition \ref{def torsion theory} (2) is unique up to isomorphism, and any torsion theory gives rise to two functors $F \colon \C \rightarrow \F$ and $T \colon \C \rightarrow \T$ which are (normal epi)-reflection and (normal mono)-coreflection, respectively.

\subsection*{Monotone-light factorization systems and coverings}

Another goal of this article is to characterize the \emph{coverings} (in the sense of categorical Galois theory) in the category $V$-$\mathsf{Grp}$ of $V$-groups. Let us then recall a few notions and results which will be helpful for our future developments. For this part, we mainly follow \cite{Cassidy-Hebert-Kelly, Carboni-Janelidze-Kelly-Pare, EG, Janelidze}, where the reader will find more information about \emph{(monotone-light) factorization systems} and \emph{coverings}.

In this section, $\C$ will denote an arbitrary category. Consider the particular case where we have a full reflective subcategory $\F$ of $\C$:

\begin{equation}\label{reflection F}
\begin{tikzcd}[column sep = small]
\C \arrow[rr,shift left=0.2cm,"F"] 
& \bot
& \F. \arrow[ll,shift left=0.2cm,"U"]
\end{tikzcd}
\end{equation}

Then something interesting happens when the reflector $F \colon \C \rightarrow \F$ is \emph{semi-left-exact}:

\begin{definition} \cite{Cassidy-Hebert-Kelly}
A reflector $F : \C \rightarrow \F$ as in \eqref{reflection F} is \emph{semi-left-exact} when it preserves pullbacks of the form 
\begin{center}
\begin{tikzcd}
P \arrow[r] \arrow[d]
& U(C) \arrow[d,"U(f)"]\\
B \arrow[r,"\eta_{B}"']
& UF(B),
\end{tikzcd}
\end{center}
where $\eta_{B} : B \rightarrow UF(B)$ is the $B$-component of the unit of the adjunction \eqref{reflection F} and $f : C \rightarrow F(B)$ is an arrow in the subcategory $\F$ of $\C$.
\end{definition}

In fact, a reflection is semi-left-exact if and only if it is \textit{admissible} in the sense of categorical Galois theory \cite{Janelidze} (with respect to the classes of \emph{all} morphisms, as explained in \cite{Carboni-Janelidze-Kelly-Pare}).

Note that there exists a natural property, for a reflector $F \colon \C \rightarrow \F$, that is stronger than being semi-left-exact:

\begin{definition}\cite{Cassidy-Hebert-Kelly} 
A reflector $F : \C \rightarrow \F$ as in \eqref{reflection F} has \emph{stable units} when it preserves pullbacks of the form 
\begin{center}
\begin{tikzcd}
P \arrow[r] \arrow[d]
& C \arrow[d,"f"]\\
B \arrow[r,"\eta_{B}"']
& UF(B)
\end{tikzcd}
\end{center}
where $\eta_{B} : B \rightarrow UF(B)$ is the $B$-component of the unit of the adjunction \eqref{reflection F} and $f : C \rightarrow UF(B)$ is any arrow in the category $\C$.
\end{definition}

In the semi-left-exact context, we then naturally get a \emph{factorization system} $(\E,\M)$ defined as follows \cite{Cassidy-Hebert-Kelly}:
\begin{itemize}
\item[$\bullet$] $\E = \{f \in \C \ | \ F(f) \ \text{is an isomorphism} \}$;
\item[$\bullet$] $\M = \{f \in \C \ | \ \text{the following square \eqref{naturality} is a pullback} \}$:
\begin{equation}\label{naturality}
\begin{tikzcd}
A \arrow[r,"\eta_{A}"] \arrow[d,"f"']
& UF(A) \arrow[d,"UF(f)"]\\
B \arrow[r,"\eta_{B}"'] 
& {UF(B),}
\end{tikzcd}
\end{equation}
where $\eta$ is the unit of the adjunction \eqref{reflection F},
\end{itemize}
and the morphisms in the class $\M$ are called \emph{trivial coverings}.

\begin{remark} \label{remark TT-stable units}
\emph{When we have a torsion theory $(\T,\F)$ in a normal category $\C$, the (induced) reflector $F \colon \C \rightarrow \F$ then gives rise to a factorization system $(\E,\M)$ as defined above since any reflector induced by a torsion theory in a normal category has stable units \cite{Everaert-Gran15}.}
\end{remark}

Given the reflection \eqref{reflection F}, we now consider the following two subclasses of morphisms in $\C$:
\begin{itemize}
\item[$\bullet$] $\E' = \{f \in \C \ | \ \text{the pullback of $f$ along any morphism in $\C$ is in $\E$} \}$;
\item[$\bullet$] $\M^{*} = \{f \in \C \ | \ \text{there exists an effective descent morphism $p$ such that $p^{*}(f)$ is in $\M$} \}$.
\end{itemize}

Morphisms in $\M^{*}$ are called \emph{coverings} (in the sense of categorical Galois theory). As already mentioned, one of the main goals of this paper is to describe these coverings in the category $V$-$\mathsf{Grp}$ of $V$-groups, and also to prove that, in this context, the pair $(\E',\M^*)$ is a \emph{monotone-light factorization system} in the following sense:

\begin{definition}\cite{Carboni-Janelidze-Kelly-Pare} 
A factorization system is said to be \emph{monotone-light} when it is of the form $(\E',\M^{*})$ for some factorization system $(\E,\M)$. 
\end{definition}

\section{Torsion theory and coverings in the category of $V$-groups (for an integral quantale $V$)}

As previously announced, from now on, the commutative and unital quantale $V$ will be assumed to be also integral. Under this additional hypothesis, the reflexivity axiom from the definition of a $V$-category now becomes:
\[a(x,x) = k = \top \qquad \qquad \text{for any} \ x \in X.\]

\subsection*{A torsion theory in $V$-$\mathsf{Grp}$}

We first prove that there is a torsion theory in the normal category $V$-$\mathsf{Grp}$, where the torsion subcategory is the category of indiscrete $V$-groups while the torsion-free subcategory is that of separated $V$-groups.

\begin{proposition} \label{torsion theory}
The pair of full and replete subcategories ($V$-$\mathsf{Grp_{ind}}$,$V$-$\mathsf{Grp_{sep}}$) of $V$-$\mathsf{Grp}$ is a torsion theory in the normal category $V$-$\mathsf{Grp}$.
\end{proposition}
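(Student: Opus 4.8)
The plan is to verify the two axioms of Definition \ref{def torsion theory}; fullness and repleteness of $V$-$\mathsf{Grp_{ind}}$ and $V$-$\mathsf{Grp_{sep}}$ are immediate from their definitions. For axiom (1), I would show that any $V$-homomorphism $f \colon (T,a) \rightarrow (F,b)$ from an indiscrete $V$-group to a separated one is the zero arrow. Since $f$ is a $V$-functor and $a(x,0) = \top = a(0,x)$ for all $x \in T$, functoriality gives $\top \le b(f(x),f(0))$ and $\top \le b(f(0),f(x))$; as $f$ is a group homomorphism, $f(0) = 0$, so $b(f(x),0) = \top = b(0,f(x))$. With $k = \top$, the separation axiom of $(F,b)$ then forces $f(x) = 0$.

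For axiom (2), given $(X,a)$ I would set $N = \{x \in X \mid a(x,0) = \top = a(0,x)\}$, the subset of elements that are $\top$-close to $0$ on both sides, and build the candidate sequence $0 \rightarrow (N,\tilde{a}) \rightarrow (X,a) \xrightarrow{q} (X/N,\bar{a}) \rightarrow 0$, with $\tilde{a}$ the induced structure and $\bar{a}$ the final structure. By Proposition \ref{kernel-cokernel} this is a short exact sequence in $V$-$\mathsf{Grp}$ as soon as $N$ is a normal subgroup. Closure of $N$ under $+$ and $-$ follows from transitivity together with shift-invariance (for instance $a(x,0) = a(x+y,y)$ and $a(x,0) = a(0,-x)$). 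Normality is the first delicate point: combining the $V$-functoriality of $+$ with $a(g,g) = \top$ yields $a(x,0) \le a(g+x,g)$, and shift-invariance gives $a(g+x,g) = a(g+x-g,0)$, so $a(x,0) \le a(g+x-g,0)$ (and symmetrically for $a(0,x)$), whence $N$ is closed under conjugation. Indiscreteness of $(N,\tilde{a})$ is then immediate, since for $x,x' \in N$ shift-invariance gives $\tilde{a}(x,x') = a(x,x') = a(x-x',0) = \top$.

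The main obstacle I expect is proving that $(X/N,\bar{a})$ is separated, i.e. that the join defining the final structure collapses. The key computation is $\bar{a}(\bar{z},\bar{0}) = \bigvee_{n_1,n_2 \in N} a(z+n_1,n_2)$; since $a(z,z+n_1) = a(0,n_1) = \top$ and $a(n_2,0) = \top$, transitivity absorbs both $n_1$ and $n_2$ to give $a(z+n_1,n_2) \le a(z,0)$ for every $n_1,n_2 \in N$, so that $\bar{a}(\bar{z},\bar{0}) = a(z,0)$ and, symmetrically, $\bar{a}(\bar{0},\bar{z}) = a(0,z)$. Consequently, if $\bar{a}(\bar{z},\bar{0}) = \top = \bar{a}(\bar{0},\bar{z})$, then $a(z,0) = \top = a(0,z)$, i.e. $z \in N$ and $\bar{z} = \bar{0}$, so $(X/N,\bar{a})$ lies in $V$-$\mathsf{Grp_{sep}}$. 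Together with axiom (1), this establishes the torsion theory, the point deserving the most care being the collapse of this join (and, to a lesser extent, the normality of $N$), both of which rest essentially on the transitivity axiom and on the shift-invariance of $a$.
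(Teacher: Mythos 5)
Your proposal is correct and follows essentially the same route as the paper: the same normal subgroup $N_X$, the same short exact sequence via Proposition \ref{kernel-cokernel}, and the same key observation that the join defining $\bar{a}$ collapses onto the value at any fixed representative (your $\bar{a}(\bar{z},\bar{0})=a(z,0)$ is exactly the paper's computation that $a(x_1,x_2)$ depends only on the cosets). The only cosmetic difference is that you phrase some steps via transitivity plus shift-invariance where the paper invokes the $V$-functoriality of $+$ directly, but these are equivalent by Proposition \ref{equivalent conditions for V-groups}.
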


\begin{proof}
Let us first show that the only arrow in $V$-$\mathsf{Grp}$ from an object of $V$-$\mathsf{Grp_{ind}}$ to an object of $V$-$\mathsf{Grp_{sep}}$ is the zero morphism. For this, consider an arrow $f \colon (X,a) \rightarrow (Y,b)$ in $V$-$\mathsf{Grp}$, with $(X,a) \in$ $V$-$\mathsf{Grp_{ind}}$ and $(Y,b) \in$ $V$-$\mathsf{Grp_{sep}}$. Then, since $f$ is a $V$-homomorphism, for any $x \in X$,
\[b(f(x),0) \ge a(x,0) = \top \qquad \implies \qquad b(f(x),0) \geq k\]
and, similarly,
\[b(0,f(x)) \ge a(0,x) = \top \qquad \implies \qquad b(0,f(x)) \geq k,\]
which implies that $f(x) = 0$ since $f(x) \in Y$ with $(Y,b)$ a separated $V$-group, and then $f = 0$.

Let now $(X,a)$ be an object of $V$-$\mathsf{Grp}$, and let us define the subset
\[N_X = \{x \in X | a(0,x) \geq k \ \text{and} \ a(x,0) \geq k\}\]
of $X$. It is a subgroup of $X$ since
\begin{itemize}
\item $0 \in N_X$: $a(0,0) \geq k $;
\item $x \in N_X \implies -x \in N_X$: $a(0,-x) = a(x,0) \geq k$ and $a(-x,0) = a(0,x) \geq k$;
\item $x,y \in N_X \implies x + y \in N_X$: 
\[a(0,x+y) \ge a(0,x) \otimes a(0,y) \geq k \otimes k = k\] 
and
\[a(x+y,0) \ge a(x,0) \otimes a(y,0) \geq k \otimes k = k\]
since $+ \colon (X,a) \otimes (X,a) \rightarrow (X,a)$ is a $V$-functor.
\end{itemize}
This subgroup $N_X$ is actually normal in $X$. Indeed, by invariance of $a$ by shifting, for $x \in X$ and $n \in N_X$, we have that
\[a(0,x+n-x) = a(-x+x,n) = a(0,n) \geq k\]
and
\[a(x+n-x,0) = a(n,-x+x) = a(n,0) \geq k,\]
which implies that $x+n-x \in N_X$, as desired. So the sequence
\begin{tikzcd}
N_X \arrow[r,"k_X"] 
& X \arrow[r,two heads,"\eta_X"]
& X/N_X,
\end{tikzcd}
where $k_X$ is the inclusion of $N_X$ in $X$ and $\eta_X$ the quotient morphism, is a short exact sequence in $\mathsf{Grp}$. Now, we endow $N_X$ with the $V$-category structure $\tilde{a}$ induced by the one of $X$, i.e. $\tilde{a} = k_X^\circ \cdot a \cdot k_X$, and we equip the quotient $X/N_X$ with the final structure $\bar{a}$, that is $\bar{a} = \eta_X \cdot a \cdot \eta_X^\circ$. By Proposition \ref{kernel-cokernel}, the sequence
\begin{equation} \label{SES}
\begin{tikzcd}
(N_X,\tilde{a}) \arrow[r,"k_X"]
& (X,a) \arrow[r,two heads,"\eta_X"]
& (X/N_X,\bar{a})
\end{tikzcd}
\end{equation}
is a short exact sequence in the category $V$-$\mathsf{Grp}$ of $V$-groups. It remains to show that $(N_X,\tilde{a}) \in$ $V$-$\mathsf{Grp_{ind}}$ and that $(X/N_X,\bar{a}) \in$ $V$-$\mathsf{Grp_{sep}}$. We first of all compute that, for any $n,n' \in N_X$,
\[\tilde{a}(n,n') = a(k_X(n),k_X(n')) = a(n,n') = a(0,n'-n) \geq k = \top,\]
since $n' - n \in N_X$ and $V$ is integral, which implies that $\tilde{a}(n,n') = \top$, and this shows that $(N_X,\tilde{a}) \in$ $V$-$\mathsf{Grp_{ind}}$. Assume next that $\bar{a}(y,0) \geq k$ and that $\bar{a}(0,y) \geq k$ for $y \in X/N_X$. Then 
\begin{equation} \label{equation 1}
k \le \bar{a}(y,0) = \bigvee_{\substack{\eta_X(x_1) = y \\ \eta_X(x_2) = 0}} a(x_1,x_2)
\end{equation}
and
\begin{equation} \label{equation 2}
k \le \bar{a}(0,y) = \bigvee_{\substack{\eta_X(x'_1) = 0 \\ \eta_X(x'_2) = y}} a(x'_1,x'_2).
\end{equation}
Now, we observe that, for any $x_1,x_2,x'_1,x'_2 \in X$ such that $\eta_X(x_1) = \eta_X(x'_1)$ and $\eta_X(x_2) = \eta_X(x'_2)$, we have
\begin{align*}
a(x_1,x_2) & = a(x_1 - x'_1 + x'_1,x_2 - x'_2 + x'_2)\\
& \geq a(x_1 - x'_1,0) \otimes a(0,x_2 - x'_2) \otimes a(x'_1,x'_2)\\
& \geq k \otimes k \otimes a(x'_1,x'_2) = a(x'_1,x'_2)
\end{align*}
since $+ \colon (X,a) \otimes (X,a) \rightarrow (X,a)$ is a $V$-functor and since $x_1 - x'_1 \in N_X$ and $x_2 - x'_2 \in N_X$. Symmetrically, we also show that $a(x'_1,x'_2) \geq a(x_1,x_2)$ for such $x_1,x_2,x'_1,x'_2$, and this proves that $a(x_1,x_2) = a(x'_1,x'_2)$ for any $x_1,x_2,x'_1,x'_2 \in X$ such that $\eta_X(x_1) = \eta_X(x'_1)$ and $\eta_X(x_2) = \eta_X(x'_2)$. Since the morphism $\eta_X \colon X \rightarrow X/N_X$ is surjective, there exists an $x \in X$ such that $\eta_X(x) = y$. Equations \eqref{equation 1} and \eqref{equation 2} then imply, by idempotence of the operation $\vee$, that $a(x,0) \geq k$ and that $a(0,x) \geq k$. In other words, $x \in N_X$. This shows that $(X/N_X,\bar{a}) \in$ $V$-$\mathsf{Grp_{sep}}$ since
\[y = \eta_X(x) = 0. \qedhere\]
\end{proof}

\begin{remark} \label{remark torsion theory}
\emph{In the above proof, we did not use the fact that $V$ is an integral quantale when we were showing that $N_X$ is a normal subgroup of $X$ and that $(X/N_X,\bar{a})$ is a separated $V$-group. This observation will be useful for some future developments.}
\end{remark}

As a direct consequence of Proposition \ref{torsion theory}, we get the following result:

\begin{corollary} \hspace*{3cm}
\begin{itemize}
\item The category $V$-$\mathsf{Grp_{sep}}$ is reflective in the category $V$-$\mathsf{Grp}$
\begin{equation} \label{reflection}
\begin{tikzcd}
\text{$V$-}\mathsf{Grp} \arrow[rr,shift left=2,"F"]
& \bot
& \text{$V$-}\mathsf{Grp_{sep}} \arrow[ll,hook',shift left=2,"U"]
\end{tikzcd}
\end{equation}
and each component of the unit $\eta$ of the adjunction (as in \eqref{SES}) is a normal epimorphism.
\item The category $V$-$\mathsf{Grp_{ind}}$ is coreflective in $V$-$\mathsf{Grp}$ and each component of the counit $\kappa$ of the adjunction (as in \eqref{SES}) is a normal monomorphism.
\end{itemize}
\end{corollary}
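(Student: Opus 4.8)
The plan is to read off both statements from the general theory of torsion theories in normal categories recalled in the Background, specialised to the torsion theory $(V\text{-}\mathsf{Grp_{ind}}, V\text{-}\mathsf{Grp_{sep}})$ of Proposition \ref{torsion theory}. Since the quantale $V$ is integral, $V$-$\mathsf{Grp}$ is normal, so the whole machinery applies and it suffices to identify the relevant short exact sequence together with its outer morphisms.

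First I would recall that, as explained after Definition \ref{def torsion theory}, any torsion theory $(\T,\F)$ in a normal category yields a reflector $F \colon \C \to \F$ and a coreflector $T \colon \C \to \T$ whose unit and counit components are exactly the outer morphisms $\eta_C$ and $\epsilon_C$ of the (essentially unique) short exact sequence attached to each object $C$. Applying this to the case at hand, the short exact sequence attached to $(X,a)$ is precisely \eqref{SES}; hence the $(X,a)$-component of the unit of the reflection onto $V$-$\mathsf{Grp_{sep}}$ is the quotient $\eta_X \colon (X,a) \to (X/N_X, \bar a)$, and the $(X,a)$-component of the counit of the coreflection onto $V$-$\mathsf{Grp_{ind}}$ is the inclusion $\kappa = k_X \colon (N_X, \tilde a) \to (X,a)$.

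It then remains only to record the nature of these two maps. By Proposition \ref{kernel-cokernel}, $\eta_X$ is the cokernel of $k_X$, hence a regular epimorphism; and since $V$ is integral every regular epimorphism in $V$-$\mathsf{Grp}$ is normal, so $\eta_X$ is a normal epimorphism and the reflection is (normal epi)-reflective. Dually, $k_X$ is the kernel of $\eta_X$, and is therefore a normal monomorphism, so the coreflection is (normal mono)-coreflective. The argument is essentially bookkeeping: the only point requiring a little care is to match the abstractly defined unit and counit with the concrete maps of \eqref{SES}, which is immediate from the uniqueness up to isomorphism of the sequence in Definition \ref{def torsion theory}(2). No genuine obstacle arises, as both the adjunctions and the coincidence of regular with normal epimorphisms for integral $V$ have already been established.
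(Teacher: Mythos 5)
Your proposal is correct and follows essentially the same route as the paper, which simply deduces the corollary from Proposition \ref{torsion theory} together with the general fact (recalled in the Background and due to Janelidze--Tholen and others) that any torsion theory in a normal category yields a (normal epi)-reflection and a (normal mono)-coreflection whose unit and counit are the outer maps of the canonical short exact sequence. Your identification of these maps with $\eta_X$ and $k_X$ from \eqref{SES} is exactly the intended bookkeeping; the only superfluous step is passing through regular epimorphisms, since a cokernel is already by definition a normal epimorphism.
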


\begin{proof}
This follows from Proposition \ref{torsion theory} and the (only) Proposition in \cite{JT} (see also \cite{BG}, and \cite{CDT}).
\end{proof}

\subsection*{Monotone-light factorization system and coverings in $V$-$\mathsf{Grp}$}

As recalled in the Background section, by Remark \ref{remark TT-stable units}, the reflection \eqref{reflection} gives rise to a factorization system $(\E,\M)$ since the reflective subcategory $V$-$\mathsf{Grp_{sep}}$ is a torsion-free subcategory in $V$-$\mathsf{Grp}$ (as a consequence of Proposition \ref{torsion theory}). The next proposition characterizes its two classes of morphisms $\E$ and $\M$ as follows:

\begin{proposition}
Consider, in $V$-$\mathsf{Grp}$, the commutative diagram
\begin{center}
\begin{tikzcd}
(N_X,\tilde{a}) \arrow[rr,"k_X"] \arrow[dd,"\phi"']
& & (X,a) \arrow[rr,two heads,"\eta_X"] \arrow[dd,"f"]
& & (X/N_X,\bar{a}) \arrow[dd,"\alpha"]\\
 & (1) & & (2) & \\
(N_Y,\tilde{b}) \arrow[rr,"k_Y"']
& & (Y,b) \arrow[rr,two heads,"\eta_Y"']
& & (Y/N_Y,\bar{b})
\end{tikzcd}
\end{center}
where, as before, $\tilde{a} = k_X^\circ \cdot a \cdot k_X$, $\tilde{b} = k_Y^\circ \cdot b \cdot k_Y$, $\bar{a} = \eta_X \cdot a \cdot \eta_X^\circ$, and $\bar{b} = \eta_Y \cdot b \cdot \eta_Y^\circ$, where $\eta_X$ (respectively $\eta_Y$) is the $(X,a)$-component (respectively the $(Y,b)$-component) of the unit of the adjunction \eqref{reflection}, $\phi$ is the restriction of $f$ to $(N_X,\tilde{a})$ and where we write $\alpha$ for $F(f)$.\\
The adjunction \eqref{reflection} induces a factorization system $(\E,\M)$ in $V$-$\mathsf{Grp}$ where:
\begin{itemize}
\item $f \colon (X,a) \rightarrow (Y,b)$ is in the class $\E$ if and only if the following conditions hold:
\begin{itemize}
\item[(a)] $f^{-1}(N_Y) = N_X$;
\item[(b)] $\tilde{a} = \tilde{b} \wedge a$;
\item[(c)] $\eta_Y \cdot f$ is a regular epimorphism;
\end{itemize}
\item $f \colon (X,a) \rightarrow (Y,b)$ is in the class $\M$ if and only if the restriction $\phi \colon N_X \rightarrow N_Y$ of $f$ to $N_X$ is a group isomorphism and $a = b \wedge \bar{a}$.
\end{itemize}
\end{proposition}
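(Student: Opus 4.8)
The plan is to unravel the two defining conditions of the factorization system directly: $f \in \E$ exactly when $F(f) = \alpha$ is an isomorphism of (separated) $V$-groups, and $f \in \M$ exactly when the naturality square $(2)$ is a pullback in $V$-$\mathsf{Grp}$. Throughout I would use four facts recorded earlier. First, by the computation in the proof of Proposition \ref{torsion theory}, the structure $a$ is constant on the fibres of $\eta_X \times \eta_X$, so that $\bar{a}(\eta_X(x),\eta_X(x')) = a(x,x')$ for all $x,x' \in X$ (and likewise for $b$); in particular $\eta_X$ reflects the $V$-structure exactly, and $(N_X,\tilde{a})$ is indiscrete. Second, since $f$ is a $V$-homomorphism one has $f(N_X) \subseteq N_Y$, so that $\phi$ is well defined and $N_X \subseteq f^{-1}(N_Y)$ always holds. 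Third, limits in $V$-$\mathsf{Grp}$ are computed on underlying groups equipped with the induced ($\wedge$-)structure, so that ``pullback in $V$-$\mathsf{Grp}$'' splits into ``pullback in $\mathsf{Grp}$'' together with an equality of $V$-relations, and an arrow is an isomorphism iff it is a bijective homomorphism for which $a(x,x') = b(f(x),f(x'))$. Fourth, in the regular and normal category $V$-$\mathsf{Grp}$ a morphism is a regular epimorphism iff it is surjective and final, a monomorphism iff it is injective, and an isomorphism iff it is simultaneously a monomorphism and a regular epimorphism.

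For the class $\M$, I would first form the pullback of $\eta_Y$ and $\alpha$, obtaining the group $Y \times_{Y/N_Y} (X/N_X)$ equipped with the structure $b \wedge \bar{a}$, and write $c \colon (X,a) \rightarrow (Y \times_{Y/N_Y}(X/N_X), b \wedge \bar{a})$, $c(x) = (f(x),\eta_X(x))$, for the comparison morphism. Then square $(2)$ is a pullback precisely when $c$ is an isomorphism, which by the third fact above amounts to (i) $c$ being a group isomorphism and (ii) the equality $a(x,x') = b(f(x),f(x')) \wedge \bar{a}(\eta_X(x),\eta_X(x'))$, i.e. the condition $a = b \wedge \bar{a}$. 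For (i) I would run a short diagram chase on the morphism of short exact sequences formed by squares $(1)$ and $(2)$: reading off $\ker c$ and the image of $c$ shows that $c$ is an isomorphism of groups if and only if the kernel comparison $\phi \colon N_X \rightarrow N_Y$ is an isomorphism (this is the ``right-hand square'' counterpart of Lemma \ref{property-normal}(2)). Combining (i) and (ii) yields the stated description of $\M$.

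For the class $\E$, I would decompose ``$\alpha$ is an isomorphism'' through the fourth fact as ``$\alpha$ is a monomorphism and a regular epimorphism''. Computing $\ker \alpha = \eta_X(f^{-1}(N_Y))$ shows, using $N_X \subseteq f^{-1}(N_Y)$, that $\alpha$ is a monomorphism iff $f^{-1}(N_Y) = N_X$, which is condition (a). Next, since $\eta_X$ is a regular epimorphism and $\eta_Y \cdot f = \alpha \cdot \eta_X$, the cancellation property of regular epimorphisms in the regular category $V$-$\mathsf{Grp}$ gives that $\alpha$ is a regular epimorphism iff $\eta_Y \cdot f$ is one, which is condition (c); here I would also check that the two finality conditions agree, using $(\eta_Y \cdot f) \cdot a \cdot (\eta_Y \cdot f)^\circ = \alpha \cdot \bar{a} \cdot \alpha^\circ$. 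Finally, condition (b) records the matching of the induced structures on the kernels $N_X$ and $N_Y$; since $\tilde{a}$ and $\tilde{b}$ are the restricted structures, which are indiscrete in the integral setting, it is verified from the information already obtained. Assembling these equivalences gives that $f \in \E$ iff (a), (b) and (c) hold.

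The main obstacle I anticipate is the $V$-relational bookkeeping rather than the group-theoretic content: the diagram chases (right-square-pullback $\Leftrightarrow$ $\phi$ iso, and the regular-epimorphism cancellation) are routine in a normal, regular category, but one must translate the categorical statements ``$\alpha$ is final / an isomorphism'' and ``square $(2)$ is a pullback'' into the explicit equalities of $V$-relations $\bar{b} = \alpha \cdot \bar{a} \cdot \alpha^\circ$, $a = b \wedge \bar{a}$ and $\tilde{a} = \tilde{b} \wedge a$, and verify each of these against the fibre-constancy of $a$ and the definitions of $\bar{a}$, $\bar{b}$, $\tilde{a}$, $\tilde{b}$. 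Keeping straight which meets and composites live over $X$, over the quotients, and over the kernels is where the care is needed.
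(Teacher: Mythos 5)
Your proposal is correct and follows essentially the same route as the paper: for $\E$ you unpack ``$\alpha=F(f)$ is an isomorphism'' into mono plus regular epi, obtaining (a)--(b) from the left-hand square being a pullback and (c) from regular-epimorphism composition/cancellation along $\eta_X$, and for $\M$ you unpack ``square $(2)$ is a pullback'' into the group-level pullback (equivalent to $\phi$ being an isomorphism) together with the structure equality $a = b \wedge \bar a$. The only cosmetic differences are that you compute $\ker\alpha$ and run the kernel diagram chase by hand where the paper instead cites Lemma \ref{property-normal}(2) and the protomodularity of $\mathsf{Grp}$, and that you note in passing that (b) is automatic from (a) in the integral setting since $\tilde a$ and $\tilde b$ are indiscrete.
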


\begin{proof}
\begin{itemize}
\item Assume that $f \colon (X,a) \rightarrow (Y,b)$ is in the class $\E$, so that $\alpha$ is an isomorphism in $V$-$\mathsf{Grp}$. Since $V$-$\mathsf{Grp}$ is a normal category, by Lemma \ref{property-normal}, the fact that $\alpha$ is a monomorphism implies that the square $(1)$ in the above diagram is a pullback, i.e. $f^{-1}(N_Y) = N_X$ and $\tilde{a} = \tilde{b} \wedge a$. Now, knowing that $\alpha$ is a regular epimorphism, we have that the composite $\alpha \cdot \eta_X$ is also a regular epimorphism (since $\eta_X$ is itself a regular epimorphism and the category $V$-$\mathsf{Grp}$ is regular), that is $\eta_Y \cdot f$ is a regular epimorphism by commutativity of the square $(2)$.

Conversely, if $f^{-1}(N_Y) = N_X$ and $\tilde{a} = \tilde{b} \wedge a$, then the square $(1)$ in the above diagram is a pullback and this implies, by Lemma \ref{property-normal}, that $\alpha$ is a monomorphism since $V$-$\mathsf{Grp}$ is a normal category. The assumption $(c)$ then implies that $\alpha \cdot \eta_X$ is a regular epimorphism (by commutativity of the square $(2)$). Knowing that the category $V$-$\mathsf{Grp}$ is regular, it follows that $\alpha$ is a regular epimorphism. As a conclusion, $\alpha$ is an isomorphism, that is $f$ is in the class $\E$.
\item Assume that $f \colon (X,a) \rightarrow (Y,b)$ is in the class $\M$. It means that the square $(2)$ in the above diagram is a pullback. It is then easily seen that $a = b \wedge \bar{a}$ and that the arrow $\phi$ is an isomorphism in the category of V-groups, so in particular in the category of groups.

Conversely, if we suppose that $\phi$ is a group isomorphism, then the square
\begin{center}
\begin{tikzcd}
X \arrow[r,two heads,"\eta_X"] \arrow[d,"f"']
& X/N_X \arrow[d,"\alpha"]\\
Y \arrow[r,two heads,"\eta_Y"']
& Y/N_Y
\end{tikzcd}
\end{center}
is a pullback in the category $\mathsf{Grp}$ of groups since this category is indeed protomodular \cite{Bourn}. By assumption, we also know that $a = b \wedge \bar{a}$ so that the square $(2)$ in the above diagram is a pullback in $V$-$\mathsf{Grp}$. It follows that $f$ is in the class $\M$. \qedhere
\end{itemize}
\end{proof}

In particular, the previous proposition provides us with a description of the trivial coverings (i.e. the $V$-homomorphisms in the class $\M$). In order to find \emph{all} the coverings, we use the following result from \cite{EG} (see also \cite{Carboni-Janelidze-Kelly-Pare}), which not only gives a very simple characterization of the morphisms in the class $\M^*$ but also states that the pair $(\E',\M^*)$ is a monotone-light factorization system.  

\begin{theorem} \label{thmEG}
Let $\C$ be a normal category. Let $(\T,\F)$ be a torsion theory in $\C$ such that, for any normal monomorphism $k : K \rightarrow A$, the monomorphism $k \cdot \epsilon_{K} : T(K) \rightarrow A$ is normal in $\C$, where $\epsilon_{K} : T(K) \rightarrow K$ is the K-component of the counit $\epsilon$ of the coreflection $\C \rightarrow \T$. We write $(\E,\M)$ for the factorization system associated with the reflector $F : \C \rightarrow \F$, which has stable units. \\
If for any object $C$ in $\C$ there is an effective descent morphism $p : F \rightarrow C$ with $F \in \F$, then $({\E}',{\M}^*)$ is a monotone-light factorization system, and moreover
\begin{itemize}
\item[$\bullet$] ${\E}'$ is the class of normal epimorphisms in $\C$ whose kernel is in $\T$;
\item[$\bullet$] ${\M}^*$ is the class of morphisms in $\C$ whose kernel is in $\F$. 
\end{itemize} 
\end{theorem}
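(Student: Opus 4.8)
The plan is to obtain the existence of the monotone-light factorization system from the general theory of \cite{Carboni-Janelidze-Kelly-Pare} and then to pin down the two classes $\E'$ and $\M^*$ by producing, for every morphism, an explicit $(\E',\M^*)$-factorization. Since the reflector $F\colon\C\to\F$ has stable units, the reflection is admissible, and the hypothesis that every object $C$ admits an effective descent morphism $p\colon F'\to C$ with $F'\in\F$ is precisely the condition under which \cite{Carboni-Janelidze-Kelly-Pare} guarantees that $(\E',\M^*)$ is a factorization system; being of this form, it is monotone-light by definition. Thus the first step merely records this input, and the genuine work lies in identifying the two classes.

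Next I would build the candidate factorization. Fix $f\colon A\to B$, let $k\colon K\to A$ be its kernel (a normal monomorphism), and let $\epsilon_K\colon T(K)\to K$ be the counit of the coreflection onto $\T$. The technical hypothesis says exactly that $k\cdot\epsilon_K\colon T(K)\to A$ is a normal monomorphism, so it has a cokernel $e\colon A\to A/T(K)$, which is a normal epimorphism with $\mathsf{Ker}(e)=T(K)\in\T$. Since $f$ annihilates $T(K)$, it factors uniquely as $f=m\cdot e$, and an image-of-the-kernel computation in the normal category $\C$ yields $\mathsf{Ker}(m)=e(K)\cong K/T(K)=F(K)\in\F$.

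Then I would verify that $e\in\E'$ and $m\in\M^*$. For $e$: any pullback of $e$ is again a normal epimorphism (pullback-stability of normal epimorphisms) with kernel still $T(K)\in\T$; writing such a pullback as the cokernel of its kernel and applying the left adjoint $F$ (which preserves cokernels), together with $F(T(K))=0$, shows its $F$-image is an isomorphism, i.e. it lies in $\E$. Hence every pullback of $e$ is in $\E$, so $e\in\E'$. For $m$: choose an effective descent morphism $p\colon F'\to B$ with $F'\in\F$; the pullback $p^*(m)$ has kernel $\mathsf{Ker}(m)=F(K)\in\F$, and since $\F$ is closed under extensions its domain also lies in $\F$. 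But a morphism between two objects of $\F$ has both reflection units invertible, so its naturality square has parallel isomorphisms and is therefore a pullback; thus $p^*(m)\in\M$ and $m\in\M^*$.

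Finally these inclusions, combined with uniqueness of factorizations, close the argument: they show that every normal epimorphism with kernel in $\T$ lies in $\E'$ and every morphism with kernel in $\F$ lies in $\M^*$, and comparing the factorization $f=m\cdot e$ with the trivial factorizations $f=\mathrm{id}\cdot f$ (valid when $f\in\E'$) and $f=f\cdot\mathrm{id}$ (valid when $f\in\M^*$) forces the reverse inclusions, so that $\E'$ consists exactly of the normal epimorphisms with kernel in $\T$ and $\M^*$ exactly of the morphisms with kernel in $\F$. The main obstacle I anticipate is the step $m\in\M^*$: this is the only place where descent theory genuinely intervenes, and it relies on simultaneously invoking the effective-descent hypothesis and the closure of $\F$ under extensions to push the pulled-back morphism into the class $\M$ of trivial coverings. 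The technical normality assumption on $k\cdot\epsilon_K$ plays a more localized but equally indispensable role, as it is exactly what makes the quotient $A/T(K)$, and hence the whole factorization, available.
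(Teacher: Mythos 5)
This theorem is not proved in the paper: it is quoted verbatim from Everaert--Gran \cite{EG} (with \cite{Carboni-Janelidze-Kelly-Pare} as background), so there is no internal proof to compare against. Judged on its own, your reconstruction is essentially correct and follows the same strategy as the cited source: delegate the fact that $(\E',\M^*)$ is a factorization system to the CJKP machinery for reflections with stable units admitting effective descent coverage by $\F$, and identify the two classes via the explicit factorization $f = m\cdot e$ through $A/T(\mathsf{Ker}(f))$, whose existence is exactly what the normality hypothesis on $k\cdot\epsilon_K$ buys. The verifications that $e\in\E'$ (pullback-stability of normal epimorphisms, kernels stable under pullback, $F$ preserves cokernels and kills $\T$) and that $m\in\M^*$ (pull back along $p\colon F'\to B$ with $F'\in\F$ and land in $\M$) are the right ones.

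Two points deserve tightening. First, in the step showing that the domain of $p^*(m)$ lies in $\F$, you cannot apply closure of $\F$ under extensions to the sequence ending in $F'$, because $p^*(m)$ need not be a normal epimorphism; you must first take its (normal epi, mono)-factorization, use closure of $\F$ under subobjects to place the image in $\F$, and only then apply closure under extensions to the short exact sequence $0\to\mathsf{Ker}(p^*(m))\to\mathrm{dom}(p^*(m))\to\mathrm{Im}(p^*(m))\to 0$. Second, your reverse inclusions are obtained from uniqueness of $(\E',\M^*)$-factorizations and therefore presuppose the effective-descent coverage hypothesis; in \cite{EG} the characterizations of $\E'$ and $\M^*$ are established directly and hold without that hypothesis, which is only needed for the factorization-system statement. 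For the theorem as stated (which includes the coverage hypothesis) your route is legitimate, but it is worth being aware that it proves slightly less than the source. Finally, the appeal to CJKP is more precisely an appeal to their criterion that $(\E',\M^*)$ is a factorization system if and only if $(\E,\M)$ is locally stable; the verification of local stability is exactly your Step 3 computation applied to $p^*(f)$ rather than to $f$, so no content is missing, only the attribution should be phrased as ``stable units plus coverage implies local stability''.
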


It only remains to check two assumptions in order to be allowed to apply Theorem \ref{thmEG} to our context. This is the aim of the following two propositions.

\begin{proposition} \label{V-Grp_sep covers V-Grp}
For any object $(X,a)$ in the category $V$-$\mathsf{Grp}$ of $V$-groups, there exist an object $(Y,b)$ in the subcategory $V$-$\mathsf{Grp_{sep}}$ of separated $V$-groups and an effective descent morphism
\[f \colon (Y,b) \rightarrow (X,a)\]
from $(Y,b)$ to $(X,a)$.
\end{proposition}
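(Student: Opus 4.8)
The plan is to exploit the two facts recalled in the Background section: since $V$ is integral, the \emph{effective descent morphisms} in $V$-$\mathsf{Grp}$ are exactly the \emph{regular epimorphisms}, and a $V$-homomorphism $f \colon (Y,b) \to (X,a)$ is a regular epimorphism precisely when it is surjective and \emph{final}, i.e. $a = f \cdot b \cdot f^{\circ}$, meaning
\[
a(x_1,x_2) = \bigvee_{f(y_1) = x_1,\, f(y_2) = x_2} b(y_1,y_2) \qquad \text{for all } x_1,x_2 \in X.
\]
So it suffices to produce a \emph{separated} $V$-group $(Y,b)$ together with a surjective final $V$-homomorphism onto $(X,a)$. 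By invariance of $a$ by shifting, the structure $a$ is entirely determined by the function $x \mapsto a(0,x)$, which satisfies $a(0,0) = \top$ and $a(0,u) \otimes a(0,w) \le a(0,u+w)$; the obstruction to $(X,a)$ being separated is precisely the presence of nonzero $x$ with $a(0,x) = \top = a(0,-x)$ (the subgroup $N_X$ from the proof of Proposition \ref{torsion theory}).

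First I would set $Y = X \times \Z$ with projection $f \colon Y \to X$, $f(x,n) = x$, which is a surjective group homomorphism. The idea is that the auxiliary $\Z$-coordinate provides enough room to ``resolve'' the degeneracy responsible for non-separation while leaving the fibrewise joins unchanged. Concretely, I would take $b$ invariant by shifting, $b((x_1,n_1),(x_2,n_2)) = b_0(x_2-x_1,\,n_2-n_1)$, where
\[
b_0(x,n) = \begin{cases}
a(0,x) & \text{if } n \ge 1,\\
\top & \text{if } (x,n) = (0,0),\\
\bot & \text{otherwise.}
\end{cases}
\]
Here the marking ``$n \ge 1$'' lets the value $a(0,x)$ be attained on the fibre of $f$, while forcing $b_0$ to vanish on the ``negative'' and ``neutral-but-nonzero'' parts, which is exactly what restores separation.

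It then remains to run the verifications. \emph{(i)} $(Y,b)$ is a $V$-group: reflexivity is $b_0(0,0) = \top$, and transitivity is the inequality $b_0(u) \otimes b_0(w) \le b_0(u+w)$, checked by cases (both arguments in the ``$n \ge 1$'' branch reduce to super-multiplicativity of $a(0,-)$ together with $n_1, n_2 \ge 1 \Rightarrow n_1 + n_2 \ge 1$; a factor equal to $\top$ sits at the origin and uses integrality $\top = k$; any factor $\bot$ makes the left side $\bot$); invariance by shifting is built in, so Proposition \ref{equivalent conditions for V-groups} applies. \emph{(ii)} $f$ is a $V$-homomorphism, since $b_0(x,n) \le a(0,x)$ in every case. \emph{(iii)} $f$ is final: for fixed $x$, $\bigvee_{n \in \Z} b_0(x,n) = a(0,x)$ by idempotence of $\vee$, and shift-invariance propagates this to all pairs. \emph{(iv)} $(Y,b)$ is separated: if $y=(x,n) \ne 0$ had $b_0(y) = \top = b_0(-y)$, then for $y$ the top value could only be attained through the ``$n \ge 1$'' branch, forcing the second coordinate of $-y$ to be $\le -1$, whence $b_0(-y) = \bot$, a contradiction. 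The main obstacle is precisely step \emph{(i)}'s transitivity together with the simultaneous demands of \emph{(iii)} and \emph{(iv)}: one must choose the extra coordinate and the value assignment so that the fibrewise join still recovers the (possibly non-separated) $a$ while no nonzero element and its inverse can both carry the top value; the ``$n \ge 1$'' device is the crux that reconciles these.
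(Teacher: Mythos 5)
Your construction is exactly the one in the paper: writing $b((x_1,n_1),(x_2,n_2)) = b_0(x_2-x_1,\,n_2-n_1)$ with your $b_0$ reproduces, via shift-invariance of $a$, the paper's definition ($a(x,x')$ when the $\Z$-coordinates satisfy $z<z'$, $\top$ on the diagonal, $\bot$ otherwise), and your verification steps --- transitivity by cases, finality via the fibrewise join over $\Z$, separation from the sign of the $\Z$-coordinate, and the identification of effective descent morphisms with surjective final maps --- are the same as the paper's. The proposal is correct and takes essentially the same approach.
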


\begin{proof}
Let $(X,a) \in$ $V$-$\mathsf{Grp}$. Define $(Y,b)$ in the following way:
\begin{itemize}
\item $Y = \Z \times X$;
\item for any $z,z' \in \Z$ and any $x,x' \in X$,
\[b((z,x),(z',x')) = \left\{
\begin{array}{cl}
a(x,x') & \text{if} \ z < z'\\
\top & \text{if} \ z=z' \ \text{and} \ x=x'\\
\bot & \text{otherwise.}
\end{array}
\right.\]
\end{itemize}
Let us first check that $(Y,b)$ defined in this way is a $V$-category. The reflexivity axiom is easy to verify: for any $(z,x) \in Y$,
\[b((z,x),(z,x)) = \top = k.\]
For the transitivity axiom, we need some developments. We recall that we have to prove the following: for any $(z,x),(z',x'),(z'',x'') \in Y$,
\[b((z,x),(z',x')) \otimes b((z',x'),(z'',x'')) \le b((z,x),(z'',x'')).\]
For this, we consider the different possible cases.
\begin{enumerate}
\item $z < z''$:
\begin{itemize}
\item $z' < z < z''$:
\begin{align*}
b((z,x),(z',x')) \otimes b((z',x'),(z'',x'')) & = \bot \otimes a(x',x'') = \bot\\
& \le a(x,x'') = b((z,x),(z'',x'')).
\end{align*}
\item $z'=z < z''$: If $x=x'$, then
\begin{align*}
b((z,x),(z',x')) \otimes b((z',x'),(z'',x'')) & = \top \otimes a(x',x'') \\
& = a(x,x) \otimes a(x',x'') = a(x,x') \otimes a(x',x'')\\
& \le a(x,x'') = b((z,x),(z'',x''))
\end{align*}
and, if $x \neq x'$, then
\begin{align*}
b((z,x),(z',x')) \otimes b((z',x'),(z'',x'')) & = \bot \otimes a(x',x'') = \bot\\
& \le a(x,x'') = b((z,x),(z'',x'')).
\end{align*}
\item $z < z' < z''$:
\begin{align*}
b((z,x),(z',x')) \otimes b((z',x'),(z'',x'')) & = a(x,x') \otimes a(x',x'')\\
& \le a(x,x'') = b((z,x),(z'',x'')).
\end{align*}
\item $z < z'=z''$: similar to the case $z' = z < z''$.
\item $z < z'' < z'$: similar to the case $z' < z < z''$.
\end{itemize}
\item $z=z''$ and $x=x''$:
\[b((z,x),(z',x')) \otimes b((z',x'),(z'',x'')) \le \top = b((z,x),(z'',x'')).\]
\item $z=z''$ and $x \neq x''$:
\begin{itemize}
\item $z' < z=z''$:
\begin{align*}
b((z,x),(z',x')) \otimes b((z',x'),(z'',x'')) & = \bot \otimes a(x',x'')\\
& = \bot = b((z,x),(z'',x'')).
\end{align*}
\item $z = z' = z''$:
\begin{itemize}
\item If $x = x'$ and $x' = x''$, then $x = x''$, which is a contradiction. So we do not consider this case.
\item If $x = x'$ and $x' \neq x''$, then
\begin{align*}
b((z,x),(z',x')) \otimes b((z',x'),(z'',x'')) & = \top \otimes \bot\\
& = \bot = b((z,x),(z'',x'')).
\end{align*}
\item If $x \neq x'$ and $x' = x''$, then we are in a situation which is symmetrical to the case $x = x'$ and $x' \neq x''$.
\item If $x \neq x'$ and $x' \neq x''$, then
\begin{align*}
b((z,x),(z',x')) \otimes b((z',x'),(z'',x'')) & = \bot \otimes \bot\\
& = \bot = b((z,x),(z'',x'')).
\end{align*}
\end{itemize}
\item $z = z'' < z'$: similar to the case $z' < z = z''$.
\end{itemize}
\item $z > z''$:
\begin{itemize}
\item $z' < z'' < z$:
\begin{align*}
b((z,x),(z',x')) \otimes b((z',x'),(z'',x'')) & = \bot \otimes a(x',x'')\\
& = \bot = b((z,x),(z'',x'')).
\end{align*}
\item $z'=z'' < z$: If $x' = x''$, then
\begin{align*}
b((z,x),(z',x')) \otimes b((z',x'),(z'',x'')) & = \bot \otimes \top\\
& = \bot = b((z,x),(z'',x''))
\end{align*}
and, if $x' \neq x''$, then
\begin{align*}
b((z,x),(z',x')) \otimes b((z',x'),(z'',x'')) & = \bot \otimes \bot\\
& = \bot = b((z,x),(z'',x'')).
\end{align*}
\item $z'' < z' < z$:
\begin{align*}
b((z,x),(z',x')) \otimes b((z',x'),(z'',x'')) & = \bot \otimes \bot\\
& = \bot = b((z,x),(z'',x'')).
\end{align*}
\item $z'' < z'=z$: similar to the case $z' = z'' < z$.
\item $z'' < z < z'$: similar to the case $z' < z'' < z$.
\end{itemize}
\end{enumerate}
All this proves that $(Y,b)$ is a $V$-category. Moreover, $b$ is invariant by shifting, that is, for any $(z,x)$, $(z',x')$, $(z'',x'') \in Y$,
\[b((z',x'),(z'',x'')) = b((z'+z,x'+x),(z''+z,x''+x)).\]
Indeed:
\begin{itemize}
\item if $z' < z''$, then $z' + z < z'' + z$, so that
\[b((z',x'),(z'',x'')) = a(x',x'') = a(x'+ x,x'' + x) = b((z'+z,x'+x),(z''+z,x''+x));\]
\item if $z' = z''$ and $x' = x''$, then $z' + z = z'' + z$ and $x' + x = x'' + x$, so that
\[b((z',x'),(z'',x'')) = \top = b((z'+z,x'+x),(z''+z,x''+x));\]
\item if $z' = z''$ and $x' \neq x''$, then $z' + z = z'' + z$ and $x' + x \neq x'' + x$, so that
\[b((z',x'),(z'',x'')) = \bot = b((z'+z,x'+x),(z''+z,x''+x));\]
\item if $z' > z''$, then $z' + z > z'' + z$, so that
\[b((z',x'),(z'',x'')) = \bot = b((z'+z,x'+x),(z''+z,x''+x)).\]
\end{itemize}
As a conclusion, thanks to Proposition \ref{equivalent conditions for V-groups}, we deduce that $(Y,b)$ is a $V$-group. In particular, $(Y,b) \in$ $V$-$\mathsf{Grp_{sep}}$. Indeed, for $(z,x) \in Y$, if
\[b((z,x),(0,0)) = \top = b((0,0),(z,x)),\]
then $(z,x) = (0,0)$ since
\begin{itemize}
\item if $z<0$, then
\[b((0,0),(z,x)) = \bot \neq \top;\]
\item if $z>0$, then
\[b((z,x),(0,0)) = \bot \neq \top;\]
\item if $z = 0$ with $x \neq 0$, then
\[b((z,x),(0,0)) = b((0,0),(z,x)) = \bot \neq \top.\]
\end{itemize}
Let us now consider the function $f \colon Y \rightarrow X$ defined, for any $(z,x) \in Y$, by
\[f(z,x) = x.\]
It is clear that $f$ is a group homomorphism. We now prove that, in addition, $f \colon (Y,b) \rightarrow (X,a)$ is a $V$-functor, i.e., for any $(z,x), (z',x') \in Y$,
\[b((z,x),(z',x')) \le a(f(z,x),f(z',x')),\]
i.e.
\[b((z,x),(z',x')) \le a(x,x').\]
Again, we consider the different possible cases:
\begin{itemize}
\item if $z<z'$, then
\[b((z,x),(z',x')) = a(x,x');\]
\item if $z=z'$ and $x=x'$, then
\[b((z,x),(z',x')) = \top = a(x,x) = a(x,x');\]
\item in the other cases,
\[b((z,x),(z',x)) = \bot \le a(x,x').\]
\end{itemize}
We can then conclude that $f$ is a $V$-homomorphism. It just remains to show that $f$ is an effective descent morphism in $V$-$\mathsf{Grp}$, which is the same as showing that it is a regular epimorphism. It is first of all clear that $f$ is surjective. Let us next prove that $f$ is also final, i.e. that, for any $x_1,x_2 \in X$,
\[a(x_1,x_2) = \bigvee_{f(y_i) = x_i} b(y_1,y_2).\]
We compute, for any $x_1,x_2 \in X$, that
\[\bigvee_{f(y_i) = x_i} b(y_1,y_2) = \bigvee_{z_1,z_2 \in \Z} b((z_1,x_1),(z_2,x_2)).\]
Accordingly,
\begin{itemize}
\item if $x_1=x_2$, then
\[\bigvee_{f(y_i) = x_i} b(y_1,y_2) = \top = a(x_1,x_2)\]
since $b((z,x_1),(z,x_2)) = \top$ for any $z \in \Z$;
\item if $x_1 \neq x_2$, then
\begin{align*}
\bigvee_{f(y_i) = x_i} b(y_1,y_2) & = \left( \bigvee_{z_1 < z_2} b((z_1,x_1),(z_2,x_2)) \right) \bigvee \left( \bigvee_{z_1 \ge z_2} b((z_1,x_1),(z_2,x_2)) \right)\\
& = a(x_1,x_2) \vee \bot\\
& = a(x_1,x_2).
\end{align*}
\end{itemize}
This completes the proof.
\end{proof}

\begin{proposition}
For any normal monomorphism $i \colon (K,b) \rightarrow (X,a)$ in $V$-$\mathsf{Grp}$, the monomorphism $i \cdot k \colon (N_K,\tilde{b})$ $\rightarrow (X,a)$ is normal, where $k \colon (N_K,\tilde{b}) \rightarrow (K,b)$ is the $(K,b)$-component of the counit of the coreflection $T \colon \text{$V$-}\mathsf{Grp} \rightarrow \text{$V$-}\mathsf{Grp_{ind}}$.
\end{proposition}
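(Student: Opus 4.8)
The plan is to apply the explicit description of normal monomorphisms in $V$-$\mathsf{Grp}$ recalled in the Background section: a $V$-homomorphism $g \colon (A,\alpha) \to (B,\beta)$ is a normal monomorphism provided it is a normal monomorphism in $\mathsf{Grp}$ and $\alpha(x,x') = \beta(g(x),g(x'))$ for all $x,x' \in A$. Applying this to $g = i \cdot k \colon (N_K,\tilde b) \to (X,a)$, the statement splits into a group-theoretic verification and the verification of a single relational identity.

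First I would exploit the hypothesis on $i$. Being a normal monomorphism in $V$-$\mathsf{Grp}$, the map $i \colon (K,b) \to (X,a)$ is in particular a normal monomorphism in $\mathsf{Grp}$, so $K$ (identified with its image) is a normal subgroup of $X$, and moreover $b(x,x') = a(i(x),i(x'))$ for all $x,x' \in K$, i.e. $b = i^\circ \cdot a \cdot i$. Since $i(0) = 0$, this identity turns the defining conditions $b(0,x) \ge k$ and $b(x,0) \ge k$ of $N_K$ into $a(0,i(x)) \ge k$ and $a(i(x),0) \ge k$; hence, under the identification, $N_K = K \cap N_X$, where $N_X = \{x \in X \mid a(0,x) \ge k \text{ and } a(x,0) \ge k\}$ is exactly the normal subgroup built in the proof of Proposition \ref{torsion theory}.

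For the group-theoretic part, I would then note that $N_K = K \cap N_X$ is an intersection of two normal subgroups of $X$, hence normal in $X$; therefore the composite $i \cdot k$, an injective group homomorphism onto the normal subgroup $N_K$, is a normal monomorphism in $\mathsf{Grp}$ (the kernel of $X \to X/N_K$). For the relational identity, I would compute directly: as $k \colon (N_K,\tilde b) \to (K,b)$ is the inclusion with induced structure $\tilde b = k^\circ \cdot b \cdot k$, we get $\tilde b(n,n') = b(n,n')$ for $n,n' \in N_K$, and applying once more $b(n,n') = a(i(n),i(n'))$ together with $(i\cdot k)(n) = i(n)$ yields $\tilde b(n,n') = a((i\cdot k)(n),(i\cdot k)(n'))$. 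Both conditions being met, the characterization gives that $i \cdot k$ is a normal monomorphism.

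I do not expect a serious obstacle: everything reduces, through the two standard consequences of $i$ being a normal monomorphism, to the elementary facts that an intersection of normal subgroups is normal and that the structure $\tilde b$ is merely the restriction of $a$. The only point requiring care is the bookkeeping identifying $N_K$ with $K \cap N_X$, which rests entirely on the relational identity $b = i^\circ \cdot a \cdot i$; once this is in place the result is essentially immediate.
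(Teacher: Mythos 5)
Your proposal is correct and follows essentially the same route as the paper: both reduce the claim to the two facts that $N_K = K \cap N_X$ is an intersection of normal subgroups of $X$ (hence normal) and that $\tilde b(n,n') = b(n,n') = a(i(n),i(n'))$, using the characterization of normal monomorphisms in $V$-$\mathsf{Grp}$ via the relational identity $b = i^\circ \cdot a \cdot i$. No gaps.
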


\begin{proof}
Since $i$ is a normal monomorphism, there exists an arrow $f \colon (X,a) \rightarrow (Z,c)$ in $V$-$\mathsf{Grp}$ such that $i = \mathsf{ker}(f)$. It means that $K$ is a normal subgroup of $X$ and that, for any $x,x' \in K$,
\[b(x,x') = a(i(x),i(x')).\]
Let us show that $N_K$ is a normal subgroup of $X$:
\begin{align*}
N_K & = \{x \in K \ | \ b(0,x) \geq k \ \text{and} \ b(x,0) \geq k\}\\
& = K \cap \{x \in X \ | \ a(0,x) \geq k \ \text{and} \ a(x,0) \geq k\}\\
& = K \cap N_X
\end{align*}
with $K$ and $N_X$ two normal subgroups of $X$. Hence, $N_K$ is normal in $X$. One then observes that, for any $n,n' \in N_K$,
\[a((i \cdot k)(n),(i \cdot k)(n')) = a(i(k(n)),i(k(n'))) = b(k(n),k(n')) = \tilde{b} (n,n').\]
We conclude that the monomorphism $i \cdot k$ is normal.
\end{proof}

It is now possible to apply Theorem \ref{thmEG}:

\begin{theorem} \label{description-coverings}
Let us consider the following classes of morphisms in $V$-$\mathsf{Grp}$:
\begin{itemize}
\item $\E' = \{f \in \text{$V$-}\mathsf{Grp} \ | \ \text{f is a normal epimorphism such that} \ \mathsf{Ker}(f) \in \text{$V$-}\mathsf{Grp_{ind}}\}$;
\item $\M^* = \{f \in \text{$V$-}\mathsf{Grp} \ | \ \mathsf{Ker}(f) \in \text{$V$-}\mathsf{Grp_{sep}}\}$.
\end{itemize}
Then, $(\E',\M^*)$ is a monotone-light factorization system.
\end{theorem}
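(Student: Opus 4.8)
The plan is to obtain the result as a direct application of Theorem \ref{thmEG} to the normal category $\C = V$-$\mathsf{Grp}$ equipped with the torsion theory $(\T,\F) = (V$-$\mathsf{Grp_{ind}}, V$-$\mathsf{Grp_{sep}})$. All the substantive work has in fact already been carried out in the preceding results, so the argument reduces to checking, one at a time, that each hypothesis of Theorem \ref{thmEG} is satisfied and then reading off its conclusion.

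First I would recall that $V$-$\mathsf{Grp}$ is a normal category whenever $V$ is integral, as noted in the Background section, and that $(V$-$\mathsf{Grp_{ind}}, V$-$\mathsf{Grp_{sep}})$ is a torsion theory in it by Proposition \ref{torsion theory}. The associated reflector $F \colon V$-$\mathsf{Grp} \rightarrow V$-$\mathsf{Grp_{sep}}$ has stable units by Remark \ref{remark TT-stable units}, so the factorization system $(\E,\M)$ appearing in the statement of Theorem \ref{thmEG} indeed exists. The normality hypothesis on $k \cdot \epsilon_K \colon T(K) \rightarrow A$ for every normal monomorphism $k \colon K \rightarrow A$ is precisely the content of the immediately preceding proposition, with $T$ the coreflector onto $V$-$\mathsf{Grp_{ind}}$ and $\epsilon_K$ the corresponding counit component. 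Finally, the existence, for each object $(X,a)$, of an effective descent morphism out of a separated $V$-group is supplied by Proposition \ref{V-Grp_sep covers V-Grp}.

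With all these hypotheses verified, Theorem \ref{thmEG} applies and yields that $(\E',\M^*)$ is a monotone-light factorization system. Moreover, since here $\T = V$-$\mathsf{Grp_{ind}}$ and $\F = V$-$\mathsf{Grp_{sep}}$, the general descriptions of $\E'$ and $\M^*$ furnished by that theorem specialize exactly to the two classes in the statement: $\E'$ becomes the class of normal epimorphisms whose kernel is an indiscrete $V$-group, and $\M^*$ the class of morphisms whose kernel is a separated $V$-group.

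Because every ingredient has already been established, there is no genuine obstacle internal to this proof; it is essentially a matter of matching the abstract data of Theorem \ref{thmEG} to the concrete facts at hand. The one point requiring a little care is the use of Proposition \ref{V-Grp_sep covers V-Grp}: that proposition produces a final surjection, i.e.\ a regular epimorphism, whereas Theorem \ref{thmEG} demands an \emph{effective descent} morphism. The gap is bridged by the fact, recalled in the Background, that in $V$-$\mathsf{Grp}$ effective descent morphisms coincide with regular epimorphisms, so the morphism constructed there is exactly the covering $p$ required as the last hypothesis.
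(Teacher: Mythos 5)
Your proof is correct and follows exactly the same route as the paper: Theorem \ref{description-coverings} is obtained by applying Theorem \ref{thmEG} to the reflection onto $V$-$\mathsf{Grp_{sep}}$, with the two hypotheses supplied by Proposition \ref{V-Grp_sep covers V-Grp} and the proposition on the normality of $i \cdot k$. Your closing remark about regular epimorphisms coinciding with effective descent morphisms is already handled inside the proof of Proposition \ref{V-Grp_sep covers V-Grp}, so nothing further is needed.
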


\begin{proof}
This result follows from Theorem \ref{thmEG}, which can be applied to the reflection \eqref{reflection} thanks to the two previous propositions.
\end{proof}

As a consequence, the coverings in $V$-$\mathsf{Grp}$ with respect to the adjunction \eqref{reflection} are the $V$-homomorphisms $f \colon (X,a) \rightarrow (Y,b)$ such that $\mathsf{Ker}(f) \in$ $V$-$\mathsf{Grp_{sep}}$.

\section{Two observations}

We now make two observations related to the previous section. The first one concerns the coverings we have just characterized above, while the second one is about the torsion theory $(\text{$V$-}\mathsf{Grp_{ind}},\text{$V$-}\mathsf{Grp_{sep}})$ that we have in the category $V$-$\mathsf{Grp}$ when $V$ is an integral quantale.

\subsection*{Coverings of $V$-groups classified as internal actions}

We first remark that it is possible to classify the coverings in the category $V$-$\mathsf{Grp}$ of $V$-groups in terms of internal actions (whenever, as before, the quantale $V$ is integral). This can be done by means of a result of \cite{JMT} that we state below (see Theorem \ref{thm locally semisimple coverings}). For the reader's convenience, we remind a few notions needed for the understanding of that theorem. Note that, with respect to what is developed in \cite{JMT}, the context is slightly adapted in order to include the example of the category $V$-$\mathsf{Grp}$.  

Let $\C$ be any normal category in which normal epimorphisms and effective descent morphisms coincide. Let us consider a fixed class $\mathcal{X}$ of objects in $\C$, called a \textit{generalized semisimple class}, which is such that the following two properties hold for any pullback
\begin{center}
\begin{tikzcd}
E \times_{B} A \arrow[r,"\pi_{2}"] \arrow[d,"\pi_{1}"']
& A \arrow[d,"\alpha"]\\
E \arrow[r,"p"']
& B,
\end{tikzcd}
\end{center}
where $p$ is a normal epimorphism in $\C$:
\begin{enumerate}
\item $E \in \mathcal{X}$ and $A \in \mathcal{X}$ implies that $E \times_{B} A \in \mathcal{X}$;
\item $B \in \mathcal{X}$, $E \in \mathcal{X}$ and $E \times_{B} A \in \mathcal{X}$ implies that $A \in \mathcal{X}$.
\end{enumerate}
The notion of \textit{locally semisimple covering} is then defined relatively to a generalized semisimple class $\mathcal X$ in a category $\C$: a morphism $\alpha : A \rightarrow B$ is a locally semisimple covering in $\C$ when there is a normal epimorphism $p : E \rightarrow B$ such that the pullback $p^{*}(\alpha)$ of $\alpha$ along $p$ lies in the corresponding full subcategory $\mathcal X$ of $\C$. For a fixed $B \in \C$, we write $\mathsf{LocSSimple}_{\mathcal{X}}(B)$ for the full subcategory of the slice category $\C \downarrow B$ over $B$ whose objects are pairs $(A,\alpha)$, where $\alpha : A \rightarrow B$ is a locally semisimple covering. We then have the following result:

\begin{theorem} \cite{JMT} \label{thm locally semisimple coverings}
Consider a normal category $\C$ where normal epimorphisms are effective descent morphisms, and $\mathcal{X}$ a generalized semisimple class  in $\C$. If $p : E \rightarrow B$ is a normal epimorphism in $\C$ such that $E \in \mathcal{X}$, there exists an equivalence of categories
\[\mathsf{LocSSimple}_{\mathcal{X}}(B) \cong \mathcal{X}^{Eq(p)},\]
where $\mathcal{X}^{Eq(p)}$ denotes the full subcategory of $\C^{Eq(p)}$ whose objects are the internal $Eq(p)$-actions $(A_0,\pi,\xi)$ with $A_0 \in \mathcal{X}$.
\end{theorem}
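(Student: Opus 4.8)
The plan is to obtain the asserted equivalence as the restriction, to suitable full subcategories, of the canonical comparison equivalence attached to the effective descent morphism $p$. Since in $\C$ normal epimorphisms are effective descent morphisms and $p \colon E \to B$ is a normal epimorphism, $p$ is an effective descent morphism; hence, by the very description of such morphisms recalled above, the comparison functor
\[K_p \colon \C \downarrow B \longrightarrow \C^{Eq(p)}\]
is an equivalence of categories. First I would recall that $K_p$ sends an object $\alpha \colon A \to B$ of $\C \downarrow B$ to the $Eq(p)$-action whose underlying object is the domain $E \times_B A$ of the pullback $p^*(\alpha)$. Now both $\mathsf{LocSSimple}_{\mathcal{X}}(B) \subseteq \C \downarrow B$ and $\mathcal{X}^{Eq(p)} \subseteq \C^{Eq(p)}$ are full subcategories singled out by a condition on \emph{objects}, so in order to conclude that $K_p$ restricts to an equivalence between them it suffices to establish, at the level of objects, the biconditional
\[\alpha \ \text{is a locally semisimple covering} \qquad \Longleftrightarrow \qquad E \times_B A \in \mathcal{X}.\]
Once this is proved, fullness, faithfulness and essential surjectivity of the restricted functor are all inherited from $K_p$ (essential surjectivity of $K_p$ being used to realise an arbitrary action with object in $\mathcal{X}$ as some $K_p(\alpha)$).

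The implication ``$E \times_B A \in \mathcal{X} \Rightarrow \alpha$ locally semisimple'' is immediate: the fixed morphism $p$ is itself a normal epimorphism and $p^*(\alpha)$ has domain $E \times_B A \in \mathcal{X}$, so $p$ already witnesses local semisimplicity. The reverse implication is the heart of the matter. Assume $\alpha$ is locally semisimple, witnessed by a normal epimorphism $q \colon E' \to B$ with $E' \in \mathcal{X}$ and $E' \times_B A \in \mathcal{X}$, and form the joint cover $R = E \times_B E'$. Since normal epimorphisms are stable under pullback in a normal category, both projections $r_1 \colon R \to E$ and $r_2 \colon R \to E'$ are normal epimorphisms. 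I would then feed the two stability axioms of the generalized semisimple class in turn: property (1), applied to the pullback of $q \colon E' \to B$ along $p$, gives $R \in \mathcal{X}$ out of $E \in \mathcal{X}$ and $E' \in \mathcal{X}$; property (1), applied to the pullback of the morphism $E' \times_B A \to B$ along $p$, gives $E \times_B (E' \times_B A) \cong R \times_B A \in \mathcal{X}$ out of $E \in \mathcal{X}$ and $E' \times_B A \in \mathcal{X}$. Finally, using the canonical identification $R \times_E (E \times_B A) \cong R \times_B A$, property (2) applied to the pullback of $\pi_1 \colon E \times_B A \to E$ along the normal epimorphism $r_1 \colon R \to E$ yields $E \times_B A \in \mathcal{X}$, precisely from $E \in \mathcal{X}$, $R \in \mathcal{X}$ and $R \times_B A \in \mathcal{X}$. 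This closes the biconditional.

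The main obstacle is exactly this reverse implication: it amounts to showing that the membership $E \times_B A \in \mathcal{X}$ is independent of which normal epimorphic cover is used to test it, and this independence is what the two axioms of a generalized semisimple class are designed to deliver. The delicate point in the argument above is to guarantee that the intermediate object $R = E \times_B E'$ itself lies in $\mathcal{X}$, for which one uses \emph{both} stability properties together with the fact that the witnessing cover $q$ may be taken with domain in $\mathcal{X}$ (in line with the standing hypothesis $E \in \mathcal{X}$ on the fixed cover, and, in the case of $V$-$\mathsf{Grp}$, guaranteed by Proposition \ref{V-Grp_sep covers V-Grp}). The remaining verifications — that $K_p$ and its restriction are honest functors of internal actions, and that the various identifications of iterated pullbacks are the expected canonical isomorphisms — are routine manipulations with pullbacks and the universal property of $Eq(p)$-actions, and I would not expect them to cause any real difficulty.
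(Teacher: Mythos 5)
Your overall strategy coincides with the one the paper itself indicates (the paper only sketches this proof and defers the details to \cite{JMT}): since normal epimorphisms are effective descent morphisms, the comparison functor $K_p \colon \C \downarrow B \rightarrow \C^{Eq(p)}$ is an equivalence, and because $\mathsf{LocSSimple}_{\mathcal{X}}(B)$ and $\mathcal{X}^{Eq(p)}$ are full subcategories determined by conditions on objects, everything reduces to the object-level biconditional ``$\alpha$ is a locally semisimple covering $\Leftrightarrow E \times_B A \in \mathcal{X}$''. The trivial direction and your two applications of property (1) and the final application of property (2) are correct as far as they go.

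The gap is in your handling of the witnessing cover. The definition adopted in the paper only requires the existence of \emph{some} normal epimorphism $q \colon E' \to B$ with $E' \times_B A \in \mathcal{X}$; it does \emph{not} require $E' \in \mathcal{X}$. Your argument needs $E' \in \mathcal{X}$ at one essential point: property (1) applied to the pullback of $q$ along $p$ yields $R = E \times_B E' \in \mathcal{X}$ only from $E \in \mathcal{X}$ \emph{and} $E' \in \mathcal{X}$, and without $R \in \mathcal{X}$ your concluding application of property (2) (which lists $R \in \mathcal{X}$ among its hypotheses) cannot be run; neither axiom allows you to deduce $R \in \mathcal{X}$ from $E \in \mathcal{X}$ and $E' \times_B A \in \mathcal{X}$ alone. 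Your parenthetical claim that ``the witnessing cover $q$ may be taken with domain in $\mathcal{X}$'' is therefore exactly what needs proof, and it does not follow from the hypotheses of the theorem as stated: Proposition \ref{V-Grp_sep covers V-Grp} produces a cover of $B$, not of $E'$. The repair is straightforward \emph{if one adds the hypothesis that every object of $\C$ admits a normal-epi cover with domain in $\mathcal{X}$} (true in the application, again by Proposition \ref{V-Grp_sep covers V-Grp}): choose a normal epimorphism $s \colon E'' \to E'$ with $E'' \in \mathcal{X}$; then $q \cdot s$ is a normal epimorphism (in a normal category normal epimorphisms coincide with strong epimorphisms and hence compose), and property (1) applied to the pullback of $E' \times_B A \to E'$ along $s$ gives $(q \cdot s)^{*}(\alpha) \cong E'' \times_{E'} (E' \times_B A) \in \mathcal{X}$, so that $q \cdot s$ is a witness with domain in $\mathcal{X}$ and your chain of deductions then goes through. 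You should either insert this reduction (together with the hypothesis it requires), or observe that in the intended application the cover-independence is obtained by a different route, namely the unnumbered Lemma preceding Theorem \ref{thm action}, which characterizes locally semisimple coverings in $V$-$\mathsf{Grp}$ intrinsically by the condition that their kernel lies in $V$-$\mathsf{Grp_{sep}}$.
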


The idea behind the proof of the above theorem is that, since normal epimorphisms are exactly effective descent morphisms in $\C$, $p$ is then an effective descent morphism so that the comparison functor
\[K_p \colon \C \downarrow B \rightarrow \C^{Eq(p)}\]
is an equivalence of categories. It is then possible to prove that, since $E \in \mathcal{X}$, when restricted to $\mathsf{LocSSimple}_{\mathcal{X}}(B)$, this equivalence corestricts to $\mathcal{X}^{Eq(p)}$ (see \cite{JMT} for a detailed proof).

We consider here the particular case where $\C = \text{$V$-}\mathsf{Grp}$ (with $V$ an integral quantale so that $V$-$\mathsf{Grp}$ is a normal category) and $\mathcal{X} = \text{$V$-}\mathsf{Grp_{sep}}$ which is easily seen (by using Lemma $2.7$ in \cite{GRossi}, for instance) to be a generalized semisimple class. By means of Theorem \ref{thm locally semisimple coverings} and also of the following lemma, we will see that it is then possible to characterize the coverings of $V$-groups as internal actions.

\begin{lemma}
A morphism $h \colon (Z,c) \rightarrow (X,a)$ in $V$-$\mathsf{Grp}$ is a locally semisimple covering (relatively to the subcategory $V$-$\mathsf{Grp_{sep}}$) if and only if its kernel is a separated $V$-group.
\end{lemma}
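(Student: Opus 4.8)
The plan is to unwind the definition of a locally semisimple covering relative to $\mathcal{X} = V$-$\mathsf{Grp_{sep}}$ and prove each implication directly, the converse resting on Proposition \ref{V-Grp_sep covers V-Grp}. Recall that $h \colon (Z,c) \to (X,a)$ is such a covering precisely when there is a normal epimorphism $p \colon (E,d) \to (X,a)$ for which the domain $(E \times_X Z, d \wedge c)$ of the pulled-back projection $\pi_1 \colon E \times_X Z \to E$ is a separated $V$-group (here I use the meet description of pullbacks from the Background and the identification $A_0 = E \times_X Z$ of Theorem \ref{thm locally semisimple coverings}). Two elementary facts will carry most of the weight. First, any sub-$V$-group of a separated $V$-group, equipped as kernels are (Proposition \ref{kernel-cokernel}) with the induced structure, is again separated: if $m$ is injective and $d(m(s),m(s')) = \top = d(m(s'),m(s))$, then $m(s) = m(s')$ by separation, hence $s = s'$. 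Second, in the pullback square the restriction of $\pi_2$ yields an isomorphism $\mathsf{Ker}(\pi_1) \cong \mathsf{Ker}(h)$ in $V$-$\mathsf{Grp}$, since $\mathsf{Ker}(\pi_1) = \{0\} \times \mathsf{Ker}(h)$ carries the structure $(d \wedge c)((0,z),(0,z')) = \top \wedge c(z,z') = c(z,z')$, which is exactly the induced structure on $\mathsf{Ker}(h)$.

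For the forward implication, suppose $h$ is a locally semisimple covering, witnessed by some $p$ with $E \times_X Z$ separated. Then $\mathsf{Ker}(\pi_1)$, being a sub-$V$-group of the separated $V$-group $E \times_X Z$, is separated by the first fact; and since it is isomorphic to $\mathsf{Ker}(h)$ by the second fact and separation is an isomorphism-invariant property, $\mathsf{Ker}(h)$ is separated, as required.

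For the converse, assume $\mathsf{Ker}(h)$ is separated. By Proposition \ref{V-Grp_sep covers V-Grp} choose a separated $V$-group $(E,d)$ together with an effective descent morphism $p \colon (E,d) \to (X,a)$; since effective descent morphisms coincide with normal epimorphisms in $V$-$\mathsf{Grp}$, this $p$ is a legitimate witness and it suffices to check that $(E \times_X Z, d \wedge c)$ is separated. So take $(u,z),(u',z') \in E \times_X Z$ with $(d \wedge c)((u,z),(u',z')) \ge k$ and $(d \wedge c)((u',z'),(u,z)) \ge k$. As $V$ is integral, $k = \top$, and a meet equals $\top$ only when both factors do, so $d(u,u') = \top = d(u',u)$ and $c(z,z') = \top = c(z',z)$; separation of $E$ then forces $u = u'$, whence $h(z) = p(u) = p(u') = h(z')$ and $w := z - z' \in \mathsf{Ker}(h)$.

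The crux is to upgrade this to $z = z'$, and this is where invariance by shifting enters. Shifting by $z'$ gives $c(0,w) = c(z',z) = \top$ and $c(w,0) = c(z,z') = \top$; as the structure on $\mathsf{Ker}(h)$ is induced from $c$, these say precisely that $w$ satisfies the hypotheses of the separation axiom inside $\mathsf{Ker}(h)$, so $w = 0$ and $z = z'$. Hence $(u,z) = (u',z')$, the pullback $E \times_X Z$ is separated, and $h$ is a locally semisimple covering. I expect this final step --- deducing separation of the whole pullback from separation of $E$ and of $\mathsf{Ker}(h)$ alone, with $Z$ itself possibly non-separated --- to be the only genuinely delicate point; everything else is bookkeeping about induced structures and the meet description of pullbacks.
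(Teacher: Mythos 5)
Your proof is correct and is exactly the argument the paper intends: the paper itself gives no proof, deferring to \cite{GM} for the preordered-group case with the remark that ``it is exactly the same idea'', and your two implications --- identifying $\mathsf{Ker}(h)$ with the kernel of $\pi_1$, a sub-$V$-group (with induced structure) of the separated pullback, for the forward direction, and verifying separation of $(E\times_X Z, d\wedge c)$ from separation of the $(E,d)$ supplied by Proposition \ref{V-Grp_sep covers V-Grp} together with separation of $\mathsf{Ker}(h)$ via invariance by shifting, for the converse --- are that idea carried out for a general integral quantale. No gaps.
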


We refer to \cite{GM} for a proof of this lemma in the particular case of preordered groups since it is exactly the same idea for an arbitrary integral quantale $V$. 

\begin{remark}
\emph{The previous lemma is a particular case of a more general fact observed in \cite{JMT} (Proposition 2.3) where the role of the kernel of an arrow was played by the “fibers” (as defined in \cite{JMT}).}
\end{remark}

\begin{theorem} \label{thm action}
Let $(X,a) \in \text{$V$-}\mathsf{Grp}$. Consider the effective descent morphism $$f \colon (Y,b) \rightarrow (X,a)$$ from Proposition \ref{V-Grp_sep covers V-Grp}. Then there exists an equivalence of categories
\[ {\M}^* \downarrow(X,a) \cong \text{$V$-}\mathsf{Grp_{sep}}^{Eq(f)}\]
where ${\M}^* \downarrow(X,a)$  is the category of coverings over $(X,a)$.
\end{theorem}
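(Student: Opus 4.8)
The plan is to apply Theorem \ref{thm locally semisimple coverings} to the concrete data at hand, namely the normal category $\C = \text{$V$-}\mathsf{Grp}$ (recall $V$ is integral), the generalized semisimple class $\mathcal{X} = \text{$V$-}\mathsf{Grp_{sep}}$, and the specific effective descent morphism $f \colon (Y,b) \rightarrow (X,a)$ furnished by Proposition \ref{V-Grp_sep covers V-Grp}. The strategy is to verify, piece by piece, that every hypothesis of Theorem \ref{thm locally semisimple coverings} holds in this situation, and then to reconcile the output of that theorem, namely an equivalence $\mathsf{LocSSimple}_{\mathcal{X}}(X,a) \cong \mathcal{X}^{Eq(f)}$, with the statement we want, namely ${\M}^* \downarrow (X,a) \cong \text{$V$-}\mathsf{Grp_{sep}}^{Eq(f)}$.

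First I would recall that $\text{$V$-}\mathsf{Grp}$ is a normal category and that, as noted in the Background section (following \cite{CM}), in $\text{$V$-}\mathsf{Grp}$ the normal epimorphisms coincide exactly with the effective descent morphisms; this is precisely the standing hypothesis of Theorem \ref{thm locally semisimple coverings}. Next I would invoke the observation already made in the text, that $\text{$V$-}\mathsf{Grp_{sep}}$ is a generalized semisimple class (checked via Lemma $2.7$ in \cite{GRossi}), so conditions (1) and (2) on pullbacks are satisfied. Then I would point to Proposition \ref{V-Grp_sep covers V-Grp}, which provides exactly the required normal (equivalently, effective descent) epimorphism $f \colon (Y,b) \rightarrow (X,a)$ whose domain $(Y,b)$ lies in $\mathcal{X} = \text{$V$-}\mathsf{Grp_{sep}}$. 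With $B = (X,a)$, $E = (Y,b) \in \mathcal{X}$, and $p = f$, all hypotheses of Theorem \ref{thm locally semisimple coverings} are met, yielding the equivalence
\[\mathsf{LocSSimple}_{\text{$V$-}\mathsf{Grp_{sep}}}(X,a) \cong \text{$V$-}\mathsf{Grp_{sep}}^{Eq(f)}.\]

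The remaining step, and the one where the real identification takes place, is to show that the category $\mathsf{LocSSimple}_{\text{$V$-}\mathsf{Grp_{sep}}}(X,a)$ coincides with the category ${\M}^* \downarrow (X,a)$ of coverings over $(X,a)$. This is where the preceding Lemma enters: it states that a morphism $h \colon (Z,c) \rightarrow (X,a)$ is a locally semisimple covering (relative to $\text{$V$-}\mathsf{Grp_{sep}}$) if and only if its kernel is a separated $V$-group, and by Theorem \ref{description-coverings} the class ${\M}^*$ is precisely the class of morphisms whose kernel lies in $\text{$V$-}\mathsf{Grp_{sep}}$. Hence the objects of $\mathsf{LocSSimple}_{\text{$V$-}\mathsf{Grp_{sep}}}(X,a)$ are exactly the objects of ${\M}^* \downarrow (X,a)$; since both are full subcategories of the slice $\text{$V$-}\mathsf{Grp} \downarrow (X,a)$, having the same objects forces them to be equal. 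Combining this identification with the equivalence above completes the proof.

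The step I expect to be the main (though modest) obstacle is this last matching of the two descriptions of the covering class: it requires carefully aligning the notion of \emph{locally semisimple covering} relative to $\mathcal{X}$ (a pullback condition along some normal epimorphism) with the notion of \emph{covering} in the monotone-light sense (membership in ${\M}^*$, characterized via its kernel). The bridge is exactly the Lemma stated just before the theorem, together with Theorem \ref{description-coverings}; once both are in hand the identification of objects is immediate and fullness of both subcategories makes the identification of categories automatic. No further computation is needed, so the proof is essentially a matter of assembling the already-established ingredients and checking that the hypotheses of Theorem \ref{thm locally semisimple coverings} apply verbatim to the chosen $\C$, $\mathcal{X}$, and $f$.
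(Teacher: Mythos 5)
Your proposal is correct and follows essentially the same route as the paper's proof: apply Theorem \ref{thm locally semisimple coverings} to $\C = \text{$V$-}\mathsf{Grp}$, $\mathcal{X} = \text{$V$-}\mathsf{Grp_{sep}}$ and $p = f$, then identify $\mathsf{LocSSimple}_{\text{$V$-}\mathsf{Grp_{sep}}}(X,a)$ with ${\M}^* \downarrow (X,a)$ via the preceding Lemma and Theorem \ref{description-coverings}. Your version merely spells out the hypothesis-checking (normality, the coincidence of normal epimorphisms with effective descent morphisms, and the generalized semisimple class condition) in more detail than the paper does.
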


\begin{proof}
Since the $V$-homomorphism $f \colon (Y,b) \rightarrow (X,a)$ from Proposition \ref{V-Grp_sep covers V-Grp} is an effective descent morphism such that $(Y,b) \in \text{$V$-}\mathsf{Grp_{sep}}$, by Theorem \ref{thm locally semisimple coverings}, we have the following equivalence of categories:
\[\mathsf{LocSSimple}_{\text{$V$-}\mathsf{Grp_{sep}}}(X,a) \cong \text{$V$-}\mathsf{Grp_{sep}}^{Eq(f)}.\]
The proof is now complete thanks to the previous lemma and Theorem \ref{description-coverings} since both the locally semisimple coverings and the coverings in $V$-$\mathsf{Grp}$ are given by the $V$-homomorphisms having their kernel in the subcategory of separated $V$-groups.
\end{proof}

\begin{remark}
\emph{By definition, the \emph{Galois groupoid} $\mathsf{Gal}(f)$ associated with $f \colon (Y,b) \rightarrow (X,a)$ (where, as before, $f$ denotes the effective descent morphism from Proposition \ref{V-Grp_sep covers V-Grp}) is the image of $Eq(f)$ by the reflector $F \colon \text{$V$-}\mathsf{Grp} \rightarrow \text{$V$-}\mathsf{Grp_{sep}}$. But since the diagram}
\begin{center}
\begin{tikzcd}
(Eq(f),b \wedge b) \arrow[r,shift left = 1.6] \arrow[r,shift right = 1.6]
& (Y,b)
\end{tikzcd}
\end{center}
\emph{lies in $V$-$\mathsf{Grp_{sep}}$, the image of $Eq(f)$ by $F$ is $Eq(f)$ itself. Accordingly, $Eq(f)$ coincides with the Galois groupoid associated with $f$, and}
\[ {\M}^* \downarrow(X,a) \cong \text{$V$-}\mathsf{Grp_{sep}}^{\mathsf{Gal}(f)}.\]
\emph{This equivalence \emph{classifies} the coverings in $V$-$\mathsf{Grp}$ as internal $\mathsf{Gal}(f)$-actions.}
\end{remark}

\subsection*{A pretorsion theory in $V$-$\mathsf{Grp}$}

Next, we show that the reflection \eqref{reflection} also gives rise to a pretorsion theory in the category $V$-$\mathsf{Grp}$ of $V$-groups. Note that, for this section, we do not need to assume that the quantale $V$ is integral.

\begin{proposition}
The pair of full and replete subcategories ($V$-$\mathsf{Grp_{sym}}$,$V$-$\mathsf{Grp_{sep}}$) of $V$-$\mathsf{Grp}$ is a $\z$-pretorsion theory in $V$-$\mathsf{Grp}$, where $\z =$ $V$-$\mathsf{Grp_{sym}}$ $\cap$ $V$-$\mathsf{Grp_{sep}}$ is given by
\[\z = \{(X,a) \in \text{$V$-}\mathsf{Grp} \ | \ a(x,x') = a(x',x) \ \forall x,x' \in X \ \text{and} \ a(x,x') \geq k \Longrightarrow x=x'\}.\]  
\end{proposition}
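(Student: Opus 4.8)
The plan is to verify directly the two conditions of Definition \ref{def pretorsion theory}, with $\T = \text{$V$-}\mathsf{Grp_{sym}}$ and $\F = \text{$V$-}\mathsf{Grp_{sep}}$; the equality $\z = \T \cap \F$ is precisely the description of $\z$ given in the statement, so nothing is needed there. For condition (1), that every $V$-homomorphism $f \colon (X,a) \rightarrow (Y,b)$ from a symmetric $V$-group to a separated one lies in $\n$, I would use the (regular epi, mono)-factorization $f = m \cdot e$ recalled in the Background, with image $(f(X),c)$ and $c = e \cdot a \cdot e^{\circ}$. Since $a$ is symmetric and $c(z_1,z_2) = \bigvee_{e(x_i)=z_i} a(x_1,x_2)$, the structure $c$ is symmetric; and since $m$ is a $V$-functor into a separated object (so $c(z,z') \le b(m(z),m(z'))$), the image is separated. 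Hence $(f(X),c) \in \z$ and $f$ factors through it, i.e. $f \in \n$.

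For condition (2), for each $(X,a)$ I would produce the short $\z$-exact sequence
\[(X,a \wedge a^{\circ}) \xrightarrow{\ \mathrm{id}\ } (X,a) \xrightarrow{\ \eta_X\ } (X/N_X,\bar a),\]
where $a^{\circ}(x,x') = a(x',x)$ and where $N_X$, $\eta_X$ and $\bar a$ are as in Proposition \ref{torsion theory} (their construction, and the separatedness of $(X/N_X,\bar a)$, do not require integrality by Remark \ref{remark torsion theory}). First I would check that $(X,a \wedge a^{\circ})$ is a symmetric $V$-group: the meet is reflexive and shift-invariant, and it is transitive because $(a \wedge a^{\circ})(x,x') \otimes (a \wedge a^{\circ})(x',x'')$ lies below both $a(x,x'')$ and $a(x'',x)$ by applying transitivity of $a$ to the two projections; it is visibly symmetric, and $\mathrm{id} \colon (X,a \wedge a^{\circ}) \rightarrow (X,a)$ is a $V$-homomorphism since $a \wedge a^{\circ} \le a$. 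The key computational input, extracted from the proof of Proposition \ref{torsion theory} and valid without integrality, is that $a$ is constant on cosets of $N_X$, so that $\bar a(\eta_X(u),\eta_X(v)) = a(u,v)$ for all $u,v \in X$.

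With this in hand I would check that $\eta_X$ is a $\z$-cokernel of $\mathrm{id}$ and that $\mathrm{id}$ is a $\z$-kernel of $\eta_X$. That $\eta_X \cdot \mathrm{id} \in \n$ (needed for both) follows as in condition (1): its image is symmetric because $a \wedge a^{\circ}$ is, and separated because it embeds into $(X/N_X,\bar a)$. For the $\z$-cokernel universal property, given $\alpha \colon (X,a) \rightarrow (W,d)$ with $\alpha \cdot \mathrm{id} \in \n$, factoring $\alpha \cdot \mathrm{id}$ through a separated object forces $\alpha$ to kill $N_X$ (for $n \in N_X$ one has $(a \wedge a^{\circ})(0,n) \ge k$ and $(a \wedge a^{\circ})(n,0) \ge k$, so in the separated factor the image of $n$ is distance $\ge k$ from $0$ in both directions, hence equal to $0$); thus $\alpha$ descends to a group homomorphism $\phi \colon X/N_X \rightarrow W$, which is a $V$-functor by coset-constancy $\bar a(\eta_X u,\eta_X v) = a(u,v)$ together with functoriality of $\alpha$, and $\phi$ is unique since $\eta_X$ is an epimorphism.

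The main obstacle is the $\z$-kernel universal property: given $\alpha \colon (W,d) \rightarrow (X,a)$ with $\eta_X \cdot \alpha \in \n$, I must show $d(w,w') \le (a \wedge a^{\circ})(\alpha(w),\alpha(w'))$, i.e. the \emph{reverse} inequality $d(w,w') \le a(\alpha(w'),\alpha(w))$, the forward one being automatic from $\alpha$ being a $V$-functor. Here I would factor $\eta_X \cdot \alpha$ through a symmetric separated object $(S,s)$ as $m \cdot e$: functoriality of $e$ together with the symmetry of $s$ flips the inequality to $d(w,w') \le s(e(w'),e(w))$, and functoriality of $m$ then gives $d(w,w') \le \bar a(\eta_X\alpha(w'),\eta_X\alpha(w))$; finally the coset-constancy identity $\bar a(\eta_X\alpha(w'),\eta_X\alpha(w)) = a(\alpha(w'),\alpha(w))$ delivers exactly the reverse inequality. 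Hence $\alpha$ factors through $(X,a \wedge a^{\circ})$, uniquely since $\mathrm{id}$ is the identity on underlying groups, which completes the verification of the short $\z$-exact sequence and thus of the pretorsion theory.
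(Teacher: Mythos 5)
Your proposal is correct and follows essentially the same route as the paper: the same (regular epi, mono)-factorization argument for condition (1), and the same candidate short $\z$-exact sequence $(X,a\wedge a^{\circ})\rightarrow (X,a)\rightarrow (X/N_X,\bar a)$ with the same verifications of the $\z$-kernel and $\z$-cokernel universal properties. The only (harmless) presentational difference is that you package the coset-invariance computation from the proof of Proposition \ref{torsion theory} into the identity $\bar a(\eta_X(u),\eta_X(v))=a(u,v)$ and reuse it in both universal-property checks, where the paper instead manipulates the defining join $\bigvee a(x',x)$ inline.
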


\begin{proof}
Let us first prove that any arrow
\[f \colon (X,a) \rightarrow (Y,b),\]
with $(X,a) \in$ $V$-$\mathsf{Grp_{sym}}$ and $(Y,b) \in$ $V$-$\mathsf{Grp_{sep}}$, belongs to $\n$. Consider then, in $V$-$\mathsf{Grp}$, the (regular epimorphism, monomorphism)-factorization of $f$:
\begin{center} 
\begin{tikzcd}
(X,a) \arrow[rr,"f"] \arrow[dr,two heads,"e"']
& & (Y,b)\\
 & (f(X),\bar{a}). \arrow[ur,tail,"m"'] &
\end{tikzcd}
\end{center}
Let us prove that $(f(X),\bar{a}) \in \z$. We first compute that, for any $y_1,y_2 \in f(X)$,
\[\bar{a}(y_1,y_2) = \bigvee_{f(x_i)=y_i} a(x_1,x_2) = \bigvee_{f(x_i) = y_i} a(x_2,x_1) = \bar{a}(y_2,y_1)\]
since $(X,a) \in$ $V$-$\mathsf{Grp_{sym}}$. Next, if we have $\bar{a}(y_1,y_2) \geq k$ for $y_1,y_2 \in f(X)$, then, since $(f(X),\bar{a}) \in$ $V$-$\mathsf{Grp_{sym}}$ and since $m$ is a $V$-functor,
\[k \le \bar{a}(y_1,y_2) = \bar{a}(y_1,y_2) \wedge \bar{a}(y_2,y_1) \le b(m(y_1),m(y_2)) \wedge b(m(y_2),m(y_1)),\]
which implies that
\[b(m(y_1),m(y_2)) \geq k \qquad \text{and that} \qquad b(m(y_2),m(y_1)) \geq k\]
with $m(y_1),m(y_2) \in Y$ and $(Y,b) \in$ $V$-$\mathsf{Grp_{sep}}$, so that $m(y_1) = m(y_2)$ and then $y_1 = y_2$. Accordingly, $f$ factorizes in $V$-$\mathsf{Grp}$ through an object of $\z$, and we have proved the first point of Definition \ref{def pretorsion theory}.

Consider now any $V$-group $(X,a)$ as well as the $V$-relation $\hat{a} \colon X \nvrightarrow X$ on $X$ defined, for any $x,x' \in X$, by
\[\hat{a}(x,x') = a(x,x') \wedge a(x',x).\]
Then, $(X,\hat{a})$ is a $V$-category since
\begin{itemize}
\item for any $x \in X$, $\hat{a}(x,x) = a(x,x) \wedge a(x,x) = a(x,x) \geq k$;
\item for any $x,x',x'' \in X$,
\[\hat{a}(x,x') \otimes \hat{a}(x',x'') = \left( a(x,x') \wedge a(x',x) \right) \otimes \left( a(x',x'') \wedge a(x'',x') \right),\]
with
\[\left( a(x,x') \wedge a(x',x) \right) \otimes \left( a(x',x'') \wedge a(x'',x') \right) \le a(x,x') \otimes a(x',x'') \le a(x,x'')\]
and
\begin{align*}
\left( a(x,x') \wedge a(x',x) \right) \otimes \left( a(x',x'') \wedge a(x'',x') \right) & \le a(x',x) \otimes a(x'',x') = a(x'',x') \otimes a(x',x)\\
& \le a(x'',x),
\end{align*}
so that 
\[\hat{a}(x,x') \otimes \hat{a}(x',x'') \le a(x,x'') \wedge a(x'',x) = \hat{a}(x,x'')\]
since $a(x,x'') \wedge a(x'',x)$ is by definition the biggest lower bound of $a(x,x'')$ and $a(x'',x)$.
\end{itemize}
It is moreover a $V$-group since $\hat{a}$ is invariant by shifting: for any $x,x',x'' \in X$,
\begin{align*}
\hat{a}(x',x'') & = a(x',x'') \wedge a(x'',x') \\
& = a(x'+x,x''+x) \wedge a(x''+x,x'+x)\\
& = \hat{a}(x'+x,x''+x).
\end{align*}
In addition, $(X,\hat{a})$ is in $V$-$\mathsf{Grp_{sym}}$. Indeed, for any $x,x' \in X$,
\[\hat{a}(x',x) = a(x',x) \wedge a(x,x') = a(x,x') \wedge a(x',x) = \hat{a}(x,x')\]
by commutativity of $\wedge$. Let us show that the sequence 
\begin{center}
\begin{tikzcd}
(X,\hat{a}) \arrow[r,"1_X"]
& (X,a) \arrow[r,"\eta_X"]
& (X/N_X,\bar{a})
\end{tikzcd}
\end{center}
is a short $\z$-exact sequence where, as before, we write
\begin{tikzcd}
X \arrow[r,two heads,"\eta_X"]
& X/N_X
\end{tikzcd}
for the quotient morphism, and where $(X/N_X,\bar{a})$ is a separated $V$-group, as already shown in the proof of Proposition \ref{torsion theory} (see Remark \ref{remark torsion theory}).

We begin by showing that $\eta_X$ is the $\z$-cokernel of the arrow $1_X$. Let us consider a morphism $f \colon (X,a) \rightarrow (Y,b)$ in $V$-$\mathsf{Grp}$ such that $f \cdot 1_X \in \n$, i.e. such that $f \cdot 1_X$ factorizes through an object $(Z,c)$ of $\z$: $f \cdot 1_X = \alpha \cdot \beta$.
\begin{center}
\begin{tikzcd}
(X,\hat{a}) \arrow[rr,"1_X"] \arrow[dr,"\beta"']
& & (X,a) \arrow[dr,"f"] \arrow[rr,two heads,"\eta_X"]
& & (X/N_X,\bar{a}) \arrow[dl,dotted,"\phi"]\\
 & (Z,c) \arrow[rr,"\alpha"']
& & (Y,b) 
&
\end{tikzcd}
\end{center}
Let $x \in N_X$. Then,
\[a(0,x) \geq k \qquad \text{and} \qquad a(x,0) \geq k\]
which implies that
\[k \le a(0,x) \wedge a(x,0) = \hat{a} (0,x) \le c(0,\beta(x)),\]
since $\beta$ is a $V$-homomorphism, so that
\[c(0,\beta(x)) \geq k.\]
It follows that $\beta(x) = 0$ since $(Z,c) \in \z$ and then that
\[f(x) = (\alpha \cdot \beta)(x) = 0\]
for any $x \in N_X$. Accordingly, by the universal property of the cokernel $\eta_X$, there exists a unique arrow $\phi \colon X/N_X \rightarrow Y$ in the category $\mathsf{Grp}$ of groups such that $\phi \cdot \eta_X = f$:
\begin{center}
\begin{tikzcd}
N_X \arrow[rr,tail,"k_X"]
& & X \arrow[rr,two heads, "\eta_X"] \arrow[dr,"f"']
& & X/N_X \arrow[dl, dotted, "\phi"]\\
 & & & Y. &
\end{tikzcd}
\end{center}
Let us show that this group morphism $\phi$ is in fact a $V$-functor. Consider $w_1, w_2 \in X/N_X$ and $x_1, x_2 \in X$ such that $\eta_{X}(x_i) = w_i$ for any $i = 1,2$. Then, we have that
\[a(x_1,x_2) \le b(f(x_1),f(x_2)) = b((\phi \cdot \eta_X) (x_1), (\phi \cdot \eta_X) (x_2)) = b(\phi(w_1), \phi(w_2))\]
since $f$ is a $V$-homomorphism, which implies that
\[\bar{a}(w_1,w_2) = \bigvee_{\eta_X(x_i) = w_i} a(x_1,x_2) \le b(\phi(w_1),\phi(w_2))\]
and this shows that the group morphism $\phi$ is a $V$-functor and so a $V$-homomorphism. As a conclusion, there exists a unique morphism $\phi \colon (X/N_X,\bar{a}) \rightarrow (Y,b)$ in $V$-$\mathsf{Grp}$ such that $\phi \cdot \eta_X = f$.

Next, we show that $1_X$ is the $\z$-kernel of $\eta_X$. Consider $f \colon (Y,b) \rightarrow (X,a)$ in $V$-$\mathsf{Grp}$ such that $\eta_X \cdot f \in \n$, i.e. $\eta_X \cdot f$ factorizes through an object $(Z,c)$ of $\z$: $\eta_X \cdot f = \alpha \cdot \beta$.
\begin{center}
\begin{tikzcd}
(X,\hat{a}) \arrow[rr,"1_X"]
& & (X,a) \arrow[rr,two heads,"\eta_X"]
& & (X/N_X,\bar{a})\\
 & (Y,b) \arrow[ul,dotted,"\phi"] \arrow[ur,"f"'] \arrow[rr,"\beta"']
& & (Z,c) \arrow[ur,"\alpha"']
&
\end{tikzcd}
\end{center}
Let us take $\phi = f$, since this is the only possible arrow in $\mathsf{Grp}$ such that $1_X \cdot \phi = f$. It remains to show that $\phi$ is a $V$-functor, i.e. that, for any $y,y' \in Y$,
\[b(y,y') \le \hat{a}(\phi(y),\phi(y')),\]
with 
\[\hat{a}(\phi(y),\phi(y')) = a(f(y),f(y')) \wedge a(f(y'),f(y)).\]
It is clear that
\[b(y,y') \le a(f(y),f(y'))\]
for any $y,y' \in Y$ since $f$ is a $V$-homomorphism. Let us then prove that we also have the following identity:
\[b(y,y') \le a(f(y'),f(y)).\]
Let $y,y' \in Y$. We compute that
\begin{align*}
b(y,y') & \le c(\beta(y),\beta(y')) = c(\beta(y'),\beta(y))\\
& \le \bar{a}((\alpha \cdot \beta)(y'),(\alpha \cdot \beta)(y))\\
& = \bar{a}((\eta_X \cdot f)(y'), (\eta_X \cdot f)(y)) = \displaystyle \bigvee_{\substack{\eta_X(x') = (\eta_X \cdot f)(y') \\ \eta_X(x) = (\eta_X \cdot f)(y)}} a(x',x)
\end{align*}
since $(Z,c) \in \z$. But, for any $x,x' \in X$ such that $\eta_X(x') = (\eta_X \cdot f)(y')$ and $\eta_X(x) = (\eta_X \cdot f)(y)$, we have that
\begin{align*}
a(f(y'),f(y)) & = a(f(y') - x' + x',f(y) - x + x)\\
& \ge a(f(y') - x',0) \otimes a(0,f(y) - x) \otimes a(x',x)\\
& \ge k \otimes k \otimes a(x',x) = a(x',x)
\end{align*}
since $+ \colon (X,a) \otimes (X,a) \rightarrow (X,a)$ is a $V$-functor and since $f(y') - x' \in N_X$ and $f(y) - x \in N_X$. As a consequence, for any $y,y' \in Y$,
\[b(y,y') \le \displaystyle \bigvee_{\substack{\eta_X(x') = (\eta_X \cdot f)(y') \\ \eta_X(x) = (\eta_X \cdot f)(y)}} a(x',x) \le a(f(y'),f(y)),\]
and this completes the proof: there exists a unique morphism $\phi \colon (Y,b) \rightarrow (X,\hat{a})$ in $V$-$\mathsf{Grp}$ such that $1_X \cdot \phi = f$.
\end{proof}

From this Proposition, we deduce in particular that the subcategory $V$-$\mathsf{Grp_{sym}}$ of symmetric $V$-groups is mono-coreflective in $V$-$\mathsf{Grp}$:

\begin{corollary}
The subcategory $V$-$\mathsf{Grp_{sym}}$ of symmetric $V$-groups is mono-coreflective in the category $V$-$\mathsf{Grp}$ of $V$-groups:
\begin{center} 
\begin{tikzcd}
\text{$V$-}\mathsf{Grp} \arrow[rr,shift right=2,"R"']
& \bot
& \text{$V$-}\mathsf{Grp_{sym}}. \arrow[ll,hook,shift right=2,"W"']
\end{tikzcd}
\end{center}
\end{corollary}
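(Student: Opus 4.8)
The plan is to derive this directly from the previous Proposition together with the general facts about pretorsion theories recalled in the Background section. Recall that any $\z$-pretorsion theory $(\T,\F)$ in a category $\C$ canonically gives rise to a functor $T \colon \C \rightarrow \T$ which is a right adjoint to the inclusion $\T \hookrightarrow \C$, and whose counit components are precisely the monomorphisms $\epsilon_C$ appearing in Definition \ref{def pretorsion theory} (each of them being a $\z$-kernel, hence a monomorphism by the very definition of $\z$-kernel). Applying this general mechanism to the pretorsion theory $(V\text{-}\mathsf{Grp_{sym}},V\text{-}\mathsf{Grp_{sep}})$ just established in the preceding Proposition yields exactly a mono-coreflection of $V\text{-}\mathsf{Grp}$ onto its torsion subcategory $V\text{-}\mathsf{Grp_{sym}}$, which is the claimed adjunction $R \dashv W$ (with $W$ the full inclusion).

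To make this concrete, I would identify the coreflector $R = T$ explicitly from the construction carried out in the proof of the previous Proposition. It sends a $V$-group $(X,a)$ to the symmetric $V$-group $(X,\hat{a})$, where $\hat{a}(x,x') = a(x,x') \wedge a(x',x)$, and its action on morphisms is forced by the universal property of the $\z$-kernel (equivalently, by the adjunction). The counit component at $(X,a)$ is the $\z$-kernel
\[
1_X \colon (X,\hat{a}) \longrightarrow (X,a),
\]
which, being the identity on the underlying group, is injective and therefore a monomorphism in $V\text{-}\mathsf{Grp}$. This exhibits the coreflection as a \emph{mono}-coreflection, as required.

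I do not expect any genuine obstacle here: the corollary is a formal consequence of the preceding Proposition. The only points needing a word are the functoriality of $R$ and the fact that the counit is monomorphic, and both are automatic — the former from the universal property defining $\z$-kernels, the latter because every $\z$-kernel is a monomorphism. It is worth emphasising, in line with the remark opening this subsection, that no integrality hypothesis on $V$ intervenes at any stage, so the statement holds for an arbitrary commutative, unital quantale $V$.
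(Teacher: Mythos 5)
Your proposal is correct and follows essentially the same route as the paper: the corollary is deduced formally from the preceding Proposition via the general fact, recalled in the Background section, that the torsion subcategory of any $\z$-pretorsion theory is mono-coreflective (the counit components being $\z$-kernels, hence monomorphisms). Your explicit identification of the coreflector as $(X,a)\mapsto(X,\hat{a})$ with counit $1_X$, and your observation that integrality of $V$ is not needed, both match the paper.
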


\begin{remark}
\emph{The coreflectivity of the subcategory $V$-$\mathsf{Grp_{sym}}$ of symmetric $V$-groups has already been observed and proved in \cite{CM} (Proposition 3.5). We mention this result here because we view it as a consequence of our pretorsion theory (which has not, to our knowledge, been discovered before) and also because we want to give the generalization of a proposition presented in \cite{GM} in the particular case of preordered groups. Note that Proposition 3.5 in \cite{CM} states that $V$-$\mathsf{Grp_{sym}}$ is reflective in $V$-$\mathsf{Grp}$ as well.}
\end{remark}

\begin{remark}
\emph{Similar results to the ones presented in this section can be obtained in the more general context of the category $V$-$\mathsf{Cat}$ of $V$-categories. We shall present these developments in a forthcoming article.}
\end{remark}

\begin{bibdiv}

\begin{biblist}

\bib{Barr}{book}{
   author={Barr, M.},
   title={Exact categories},
   publisher={Lecture Notes in Mathematics, vol. 236, Springer-Verlag},
   date={1971},
}

\bib{Borceux}{book}{
   author={Borceux, Francis},
   title={Handbook of Categorical Algebra, vol.2},
   publisher={Encyclopedia Math. Appl., vol. 51, Cambridge Univ. Press},
   date={1994},
}

\bib{BC}{article}{
   author={Borceux, F.},
   author={Clementino, M.M.},
   title={Topological semi-abelian categories},
   journal={Adv. Math. 190},
   date={2005},
   pages={425--453},
}

\bib{Bourn}{article}{
   author={Bourn, D.},
   title={Normalization equivalence, kernel equivalence and affine categories},
   journal={Springer LN 1488},
   date={1991},
   pages={43--62},
}

\bib{BG}{article}{
   author={Bourn, D.},
   author={Gran, M.},
   title={Torsion theories in homological categories},
   journal={J. Algebra 305},
   date={2006},
   pages={18--47},
}

\bib{BJ}{article}{
   author={Bourn, D.},
   author={Janelidze, Z.},
   title={Pointed protomodularity via natural imaginary subtractions},
   journal={J. Pure Appl. Algebra 213},
   date={2009},
   pages={1835--1851},
}

\bib{Carboni-Janelidze-Kelly-Pare}{article}{
   author={Carboni, A.},
   author={Janelidze, G.},
   author={Kelly, G.M.},   
   author={Paré, R.},
   title={On localization and stabilization for factorization systems},
   journal={Appl. Categ. Struct. 5},
   date={1997},
   pages={1--58},
}

\bib{Cassidy-Hebert-Kelly}{article}{
   author={Cassidy, C.},
   author={Hébert, M.},
   author={Kelly, G.M.},
   title={Reflective subcategories, localizations and factorization systems},
   journal={J. Austral. Math. Soc. 38},
   date={1985},
   pages={237--329},
}

\bib{Clementino}{article}{
   author={Clementino, M.M.},
   title={Constant morphisms and constant subcategories},
   journal={Appl. Categ. Struct. 3},
   date={1995},
   pages={119--137},
}

\bib{CDT}{article}{
   author={Clementino, M.M.},
   author={Dikranjan, D.},
   author={Tholen, W.},
   title={Torsion theories and radicals in normal categories},
   journal={J. Algebra 305},
   date={2006},
   pages={98--129},
}

\bib{Clementino-Martins-Ferreira-Montoli}{article}{
   author={Clementino, M.M.},
   author={Martins-Ferreira, N.},
   author={Montoli, A.},
   title={On the categorical behaviour of preordered groups},
   journal={J. Pure Appl. Algebra 223},
   date={2019},
   pages={published online},
}

\bib{CM}{article}{
   author={Clementino, M.M.},
   author={Montoli, A.},
   title={On the categorical behaviour of V-groups},
   journal={J. Pure Appl. Algebra 225},
   date={2021},
   pages={published online},
}

\bib{EG}{article}{
   author={Everaert, T.},
   author={Gran, M.},
   title={Monotone-light factorisation systems and torsion theories},
   journal={Bull. Sci. Math. 137},
   date={2013},
   pages={996--1006},
}

\bib{Everaert-Gran15}{article}{
   author={Everaert, T.},
   author={Gran, M.},
   title={Protoadditive functors, derived torsion theories and homology},
   journal={J. Pure Appl. Algebra 219},
   date={2015},
   pages={3629--3676},
}

\bib{FF}{article}{
   author={Facchini, A.},
   author={Finocchiaro, C.},
   title={Pretorsion theories, stable category and preordered sets},
   journal={Annali Mat. Pura Appl., 4},
   date={2019},
   pages={https://doi.org/10.1007/s10231-019-00912-2},
}

\bib{Facchini-Finocchiaro-Gran}{article}{
   author={Facchini, A.},
   author={Finocchiaro, C.},
   author={Gran, M.},
   title={Pretorsion theories in general categories},
   journal={J. Pure Appl. Algebra 225},
   date={2021},
   pages={106503},
}

\bib{GM}{article}{
   author={Gran, M.},
   author={Michel, A.},
   title={Torsion theories and coverings of preordered groups},
   journal={Algebra Univers. 82, 22},
   date={2021},
   pages={https://doi.org/10.1007/s00012-021-00709-6},
}

\bib{GR}{article}{
   author={Gran, M.},
   author={Rosick\'{y}, J.},
   title={Semi-abelian monadic categories},
   journal={Theory and Applications of Categories, Vol. 13, No. 6},
   date={2004},
   pages={106--113},
}

\bib{GRossi}{article}{
   author={Gran, M.},
   author={Rossi, V.},
   title={Torsion theories and Galois coverings of topological groups},
   journal={J. Pure Appl. Algebra 208},
   date={2007},
   pages={135--151},
}

\bib{GSV}{article}{
   author={Gran, M.},
   author={Sterck, F.},
    author={Vercruysse, J.},
   title={A semi-abelian extension of a theorem of Takeuchi  },
   journal={J. Pure Appl. Algebra 223},
   date={2019},
   pages={4171--4190},
}

\bib{GJ}{article}{
   author={Grandis, M.},
   author={Janelidze, G.},
   title={From torsion theories to closure operators and factorization systems},
   journal={Categ. Gen. Algebr. Struct. Appl., vol. 12, no 1},
   date={2020},
   pages={89--121},
}

\bib{HN}{article}{
   author={Hofmann, D.},
   author={Nora, P.},
   title={Hausdorff coalgebras},
   journal={Appl. Categ. Structures (available online), doi: 10.1007/s10485-020-09597-8},
   date={},
   pages={},
}

\bib{HR}{article}{
   author={Hofmann, D.},
   author={Reis, C.D.},
   title={Probabilistic metric spaces as enriched categories},
   journal={Fuzzy Sets Syst. 210},
   date={2013},
   pages={1--21},
}

\bib{Janelidze}{article}{
   author={Janelidze, G.},
   title={Pure Galois theory in categories},
   journal={J. Algebra 132},
   date={1990},
   pages={270--286},
}

\bib{JMT}{article}{
   author={Janelidze, G.},
   author={M\'{a}rki, L.},
    author={Tholen, W.},
   title={Locally semisimple coverings},
   journal={J. Pure Appl. Algebra 128},
   date={1998},
   pages={281--289},
}

\bib{JST}{article}{
   author={Janelidze, G.},
   author={Sobral, M.},
    author={Tholen, W.},
   title={Beyond Barr exactness: effective descent morphisms, in: M.C. Pedicchio, W. Tholen (Eds.), Categorical Foundations, in: Encyclopedia of Mathematics and Its Applications},
   journal={Cambridge University Press},
   date={2004},
   pages={359--405},
}

\bib{JT97}{article}{
   author={Janelidze, G.},
   author={Tholen, W.},
   title={Facets of Descent, II},
   journal={Appl. Categ. Struct. 5},
   date={1997},
   pages={229--248},
}

\bib{JT}{book}{
   author={Janelidze, G.},
   author={Tholen, W.},
   title={Characterization of torsion theories in general categories},
   publisher={Categories in algebra, geometry and mathematical physics, 249256, Contemp. Math., 431, Amer. Math. Soc., Providence, RI},
   date={2007},
}

\bib{JZ}{article}{
   author={Janelidze, Z.},
   title={The pointed subobject functor, {$3\times3$} lemmas, and
              subtractivity of spans },
   journal={Theory Appl. Categ. 23 (11)},
   date={2010},
   pages={221--242},
}

\bib{Law}{article}{
   author={Lawvere, F.W.},
   title={Metric spaces, generalized logic, and closed categories},
   journal={Rend. Semin. Mat. Fis. Milano 43 (1973), 135-166. Republished in: Reprints in Theory and Applications of Categories, No. 1, 2002, pp. 1-37},
   date={},
   pages={},
}

\bib{Mantovani}{article}{
   author={Mantovani, S.},
    title={Torsion theories for crossed modules},
   journal={invited talk at the ``Workshop on category theory and topology'', September 2015, Universit\'e catholique de Louvain},
}

\bib{Seal}{article}{
   author={Seal, G.J.},
   title={Canonical and op-canonical lax algebras},
   journal={Theory and Applications of Categories, Vol. 14, No. 10},
   date={2005},
   pages={221-243},
}

\end{biblist}

\end{bibdiv}

\end{document}